\def\beq{\begin{equation}}
\def\eeq{\end{equation}}
\def\ba{\begin{array}}
\def\ea{\end{array}}
\def\R{\mathbb R}
\newtheorem{thm}{Theorem}[section]
\newtheorem{lm}[thm]{Lemma}
\newtheorem{prop}[thm]{Proposition}
\theoremstyle{definition}
\newtheorem{rem}[thm]{Remark}
\theoremstyle{remark}
\begin{document}
\pagestyle{plain}
\title{The existence and convergence of solutions for the nonlinear Choquard equations on groups of polynomial growth }

\author{Ruowei Li}
\email{rwli19@fudan.edu.cn}
\address{Ruowei Li: School of Mathematical Sciences, Fudan University, Shanghai 200433,
People's Republic of China; Shanghai Center for Mathematical Sciences, Fudan University, Shanghai 200433, People's Republic of China}

\author{Lidan Wang}
\email{wanglidan@fudan.edu.cn}
\address{Lidan Wang: School of Mathematical Sciences, Fudan University, Shanghai 200433, People's Republic of China}


\begin{abstract}
In this paper, we study the nonlinear Choquard equation
\begin{eqnarray*}
\Delta^{2}u-\Delta u+(1+\lambda a(x))u=(R_{\alpha}\ast|u|^{p})|u|^{p-2}u
\end{eqnarray*}
on a Cayley graph of a discrete group of polynomial growth with the homogeneous dimension $N\geq 2$, where $\alpha\in(0,N),\,p>\frac{N+\alpha}{N},\,\lambda$ is a positive parameter and $R_\alpha$ stands for the Green's function of the discrete fractional Laplacian, which has same asymptotics as the Riesz potential. Under some assumptions
on $a(x)$, we establish the existence and asymptotic behavior of ground state solutions for the nonlinear Choquard equation by the method of Nehari manifold.
\end{abstract}

 \maketitle

{\bf Keywords:}  Nonlinear Choquard equation, The discrete Green's function, Ground state solutions
\
\


\section{Introduction}
The nonlinear Choquard equation
\begin{equation}\label{0}
-\Delta u+V(x)u=(I_{\alpha}\ast|u|^{p})|u|^{p-2}u,\quad x\in \mathbb{R}^N,
\end{equation}
where $I_\alpha=\frac{\Gamma(\frac{N-\alpha}{2})}{\Gamma(\frac{\alpha}{2})\pi^{\frac{N}{2}}2^\alpha|x|^{N-\alpha}}$ is the Riesz potential, arises in various fields of mathematical physics, such as the description of the quantum
theory of a polaron at rest by S. Pekar \cite{P1} and the modelling of
an electron trapped in its own hole in the work of P. Choquard in 1976. It was
also treated as a certain approximation to Hartree-Fock theory of one-component
plasma \cite{L0}. Sometimes the equation (\ref{0}) was also known as the Schr\"{o}dinger-Newton
equation \cite{P2}, since the convolution part might be treated as a coupling with a
Newton equation.

In the last decades, a great deal of mathematical efforts has been devoted to the study of existence, multiplicity
and properties of solutions to the nonlinear Choquard equation (\ref{0}). If the
equation (\ref{0}) is equipped with a positive constant potential $V$, for $N=3,\,\alpha=2$ and $p=2$, Lieb \cite{L0} proved the existence and uniqueness, up to translations, of the ground state solution by the rearrangement technique. Later Lions \cite{L1} showed the existence of radially symmetric solutions by variational methods. For $N\geq 3$, Moroz and Van Schaftingen \cite{MV2} proved the existence of a positive ground state solution for the optimal parameter range of  $\frac{N+\alpha}{N}<p<\frac{N+\alpha}{N-2}$, and showed the regularity, positivity, radial symmetry and decay property at infinity. If the equation (\ref{0}) is equipped with the deepening potential well $V(x)=1+\lambda a(x)$, where $a(x)$ satisfies the assumptions first introduced by Bartsch and Wang \cite{BW},  then Alves et al. \cite{ANY} proved the existence and multiplicity of multi-bump shaped solutions for large $\lambda$. In \cite{L4}, L\"{u} proved the existence of ground state solutions and the sequence of solutions converges strongly to a ground state solution for the problem in a bounded domain if $\lambda$ is large enough. For more works about the existence and concentration behaviour of solutions to the Choquard equations, we refer readers to \cite{ANY1,ANY2,MV4}. In addition, there are intensive studies of the Choquard equations, see papers \cite{CCS,GV,L4,MZ,MV1,MV2,MV3}.

Nowadays, people paid attention to the analysis on discrete spaces, especially
for the nonlinear elliptic equations, see for examples \cite{CMW,GLY3,GJ,GLY2,HL,HLW,HSZ,HX,LZY,M,ZZ}. It is worth noting that Zhang and Zhao \cite{ZZ} proved the existence and concentration behaviour of ground state solutions for a class of semilinear elliptic equations equipped with the potential $V(x)=1+\lambda a(x)$ with $a(x)\rightarrow+\infty$ as $d(x,x_0)\rightarrow+\infty$. Later, the authors \cite{HSZ} generalized their results to biharmonic equations defined on graphs. As far as we know, there are no such results for the nonlinear Choquard equations on graphs. Hence, in this article, we study a class of Choquard equations on graphs.

Let $(G, S)$ be a Cayley graph of a group $G$ with a finite symmetric generating set $S$, i.e. $S=S^{-1}.$ There is a natural metric on $(G, S)$ called the word metric, denoted by $d^S.$ Let $B^{S}_{r}(a)=\{x\in G: d^{S}(x,a)\leq r\}$ be the closed ball of radius $r$ centered at $a\in G$ and denote $|B^{S}_{r}(a)|=\sharp B^{S}_{r}(a)$ as the volume of the set $B^{S}_{r}(a)$. Let $e$ be the unit element of $G$, the volume $\beta^{S}(r):=|B^{S}_{r}(e)|$ of $B^{S}_{r}(e)$ is called of growth function, see \cite{G1,G2,M1,M2,W1}.
A group $G$ is called of polynomial growth, or of polynomial volume growth, if there exists a
finite generating set $S$ such that $\beta^{S}(r)\leq Cr^A$ for any $r\geq1$ and some $A >0 $. This definition is independent of the choice of the generating set $S$ since the metrics $d^S$ and $d^{S_1}$ are bi-Lipschitz equivalent for the different finite generating sets $S$ and $S_1$. By Gromov's theorem and Bass' volume
growth estimate of nilpotent groups \cite{B}, for any group $G$ of polynomial growth, there
are constants $C_1$ and $C_2$ depending on $S$ and $N\in \mathbb{N}$ such that for any $r\geq 1$,
$$C_1 r^N\leq\beta^{S}(r)\leq C_2 r^N, $$
where the integer $N$ is called the homogeneous dimension or the growth degree of
$G$.

In this paper, we consider the Cayley graph $(G, S)$ of a group of polynomial
growth with the homogeneous dimension $N\geq 2$. In particular, $\mathbb{Z}^N$ is a Cayley
graph of a free abelian group. Let $\Omega$ be a subset of $G$, we denote by $C(\Omega)$ the space of functions on $\Omega$. The support of $u\in C(\Omega)$ is
defined as $\text{supp}(u):=\{x \in \Omega : u(x)\neq 0\}$. Moreover, we denote by the $\ell^p(\Omega)$ the space of $\ell^p-$summable functions on $\Omega$. For convenience, for any $u\in C(\Omega)$, we always write
$
\int_{\Omega}u\,d\mu:=\sum\limits_{x\in \Omega}u(x),$ where $\mu$ is the counting measure in $\Omega$.

The Hardy-Littlewood-Sobolev (HLS for abbreviation) inequality plays a key role in the study of the Choquard equations. It is worth mentioning that Huang et al. \cite{HLY} have proved the discrete HLS inequality with the Riesz potential $I_\alpha$ on $\mathbb{Z}^N$, that is, for $u\in \ell^r(\mathbb{Z}^N),\,v\in \ell^s(\mathbb{Z}^N)$,
\begin{equation}\label{bz}
\int_{\mathbb{Z}^N}(I_\alpha\ast u)(x)v(x)\,d\mu:=\underset {x,y\in\mathbb{Z}^N, x\neq y}{\sum}\frac{u(y)v(x)}{|x-y|^{N-\alpha}}\leq C_{r,s,\alpha,N}\|u\|_r\|v\|_s,
\end{equation}
where $0<\alpha<N,\,1<r,s<+\infty$ and $\frac{1}{r}+\frac{1}{s}+\frac{N-\alpha}{N}=2$. Since the Riesz potential $I_\alpha$ in the equation (\ref{0}) is exactly the Green's function of the fractional Laplacian on $\mathbb{R}^N$, it is natural to consider the discrete fractional Laplacian and its
Green's function $R_\alpha$ on $G$ (see \cite{MC} for $\mathbb{Z}^N$),
\begin{equation}\label{xm}
R_{\alpha}(x,y)=\frac{1}{|\Gamma(\frac{\alpha}{2})|}\int_{0}^{+\infty}k_t(x,y)t^{-1+\frac{\alpha}{2}}\,dt,\quad x,y\in G.
\end{equation}
Since $G$ with the homogenous dimension
$N$ satisfies the weak Poincar\'{e} inequality, the heat kernel $k_t(x, y)$ has the Gaussian heat kernel bounds \cite{B1}. Hence the Green's function $R_\alpha$ of the fractional Laplacian has the asymptotic behavior $R_{\alpha}(x,y)\simeq (d^{S}(x,y))^{\alpha-N}$ with $N>\alpha.$ By the Young's inequality for weak type spaces \cite [Theorem~1.4.25]{G}, we get the discrete HLS inequality on $G$ in Lemma \ref{lm1}, that is, for $u\in \ell^r(G),\,v\in \ell^s(G),$
\begin{equation}\label{bp}
\int_{G}(R_\alpha\ast u)(x)v(x)\,d\mu:=\underset {x,y\in G,x\neq y}{\sum}R_\alpha(x,y)u(y)v(x)\leq C_{r,s,\alpha,N}\|u\|_r\|v\|_s,
\end{equation}
where $0<\alpha<N,\,1<r,s<+\infty$ and $\frac{1}{r}+\frac{1}{s}+\frac{N-\alpha}{N}=2$.

In this paper, we study the nonlinear Choquard equation
\begin{equation}\label{1}
\Delta^{2}u-\Delta u+(1+\lambda a(x))u=(R_{\alpha}\ast|u|^{p})|u|^{p-2}u
\end{equation}
on $G$ with the homogenous dimension
$N\geq2$, where $\alpha\in(0,N),\,p>\frac{N+\alpha}{N},\,\lambda>0$ is a parameter and $R_\alpha$ is the Green's function of the discrete fractional Laplacian as defined above (\ref{xm}). Here $\Delta$ is the Laplace operator of $u\in C(G)$ defined as $\Delta u(x)=\underset {y\sim x}{\sum}(u(y)-u(x))$. And $\Delta^2$ is the biharmonic operator of $u\in C(G)$ defined in the distributional sense as, for any $\phi\in C_{c}(G)=\{u\in C(G): \text{supp}(u)\subset G~ \text{is~of~finite~cardinality}\}$,
\begin{eqnarray*}
\int_{G}(\Delta^{2}u)\phi\,d\mu=\int_{G}\Delta u\Delta\phi\,d\mu.
\end{eqnarray*}

We assume that the nonnegative potential $a(x)$ satisfies:
 \begin{itemize}
\item[($A_1$)]  The potential well $\Omega=\{x\in G:a(x)=0\}$ is a nonempty, connected and bounded domain in $G$.
\item[($A_2$)] There exists $M>0$ such that the set $\{x\in G: a(x)\leq M\}$ is finite and nonempty.

\end{itemize}
The assumptions on $a(x)$ were first introduced by Bartsch and Wang \cite{BW} in the study of a nonlinear Schr\"{o}dinger equation, which are weaker than those in \cite{HSZ,ZZ}.

Let $W^{2,2}(G)$ be the completion of $C_c(G)$ with respect to the norm
\begin{eqnarray*}
\|u\|_{W^{2,2}}=(\int_{G}(|\Delta u|^2+|\nabla u|^2+u^2)\,d\mu)^{\frac{1}{2}}.
\end{eqnarray*}

For any $\lambda>0$, we introduce a new subspace $E_{\lambda}=\{u\in W^{2,2}(G): \int_{G}\lambda a(x)u^2\,d\mu<+\infty\}$
equipped with the norm
\begin{eqnarray*}
\|u\|_{E_{\lambda}}=(\int_{G}(|\Delta u|^2+|\nabla u|^2+(1+\lambda a)u^2)\,d\mu)^{\frac{1}{2}}.
\end{eqnarray*}

The energy functional $J_{\lambda}(u): E_{\lambda}\rightarrow\R$ associated to the equation (\ref{1}) is given by
\begin{eqnarray*}
J_{\lambda}(u)=\frac{1}{2}\int_{G}(|\Delta u|^2+|\nabla u|^{2}+(1+\lambda a)u^2)\,d\mu-\frac{1}{2p}\int_{G}(R_{\alpha}\ast|u|^p)|u|^p\,d\mu.
\end{eqnarray*}
By the discrete HLS inequality (\ref{bp}), one sees that $J_{\lambda}(u)\in C^{1}(E_\lambda,\R)$ for $p>\frac{N+\alpha}{N}$. Moreover, for any $u,\,v\in E_\lambda$,
\begin{eqnarray*}\label{3}
(J'_{\lambda}(u),v)=\int_{G}(\Delta u\Delta v+\nabla u\nabla v+(1+\lambda a)uv)\,d\mu-\int_{G}(R_{\alpha}\ast|u|^p)|u|^{p-2}uv \,d\mu.
\end{eqnarray*}
We say that $u\in E_\lambda$ is a solution of (\ref{1}), if $u$ is a critical point of the energy functional $J_{\lambda}$, i.e. $J'_{\lambda}(u)=0$.
A ground state solution of (\ref{1}) means that
$u$ is a nontrivial critical point of $J_\lambda$ with the least energy, that is,
$$
J_\lambda(u)=\inf\limits_{N_\lambda} J_\lambda=m_\lambda>0,
$$
where $N_{\lambda}=\{u\in E_\lambda\backslash\{0\}: (J'_\lambda(u),u)=0\}$ is the Nehari manifold related to the equation (\ref{1}).

Now we state our first result, which is about the existence of ground state solutions to the equation (\ref{1}).

\begin{thm}\label{th1}
{\rm
Assume that $N\geq 2,\,0<\alpha<N,\,p>\frac{N+\alpha}{N}$ and ($A_1$)-($A_2$) are satisfied. Then there exists $\lambda_0>0$ such that the equation (\ref{1}) has a ground state solution $u_{\lambda}\in E_\lambda$ with any $\lambda\geq\lambda_0$. Moreover, there exists a pair of nontrivial solution $(u_\lambda, v_\lambda)=(u_\lambda,R_{\alpha}\ast|u_\lambda|^p)$ of the following system
\indent
\begin{eqnarray*}
\left\{\begin{array}{ll}
\Delta^{2}u-\Delta u+(1+\lambda a(x))u=v|u|^{p-2}u, \\
(-\Delta)^{\frac{\alpha}{2}}v=|u|^{p}.
\end{array}\right.
\end{eqnarray*}
}
\end{thm}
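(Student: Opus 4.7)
The plan is to apply the Nehari manifold method, with the loss of compactness handled by a penalization argument that forces $\lambda\geq\lambda_0$. First, combining the discrete HLS inequality (\ref{bp}) with $r=s=\frac{2N}{N+\alpha}$ and the graph embedding $\|u\|_q\leq\|u\|_2$ (valid for every $q\geq 2$ on the counting measure) gives
\[\int_G(R_\alpha\ast|u|^p)|u|^p\,d\mu \leq C\|u\|_{\frac{2Np}{N+\alpha}}^{2p}\leq C\|u\|_{E_\lambda}^{2p},\]
the exponent condition $\frac{2Np}{N+\alpha}\geq 2$ being equivalent to $p\geq\frac{N+\alpha}{N}$. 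This confirms $J_\lambda\in C^1(E_\lambda,\R)$ and yields the mountain-pass geometry: $J_\lambda(u)\geq \delta>0$ on a small sphere $\|u\|_{E_\lambda}=\rho$, and $J_\lambda(tu)\to-\infty$ as $t\to+\infty$ for any fixed $u\neq 0$.

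Second, for each $u\in E_\lambda\setminus\{0\}$ the fiber map $t\mapsto J_\lambda(tu)$ has a unique positive maximizer $t_u$ with $t_u u\in N_\lambda$, so $N_\lambda$ is a natural constraint and $m_\lambda=\inf_{N_\lambda}J_\lambda>0$. Pick a minimizing sequence $(u_n)\subset N_\lambda$; since $(J'_\lambda(u_n),u_n)=0$ on the Nehari manifold,
\[J_\lambda(u_n)=\Bigl(\tfrac12-\tfrac{1}{2p}\Bigr)\|u_n\|_{E_\lambda}^2,\]
giving boundedness in $E_\lambda$. Extract a subsequence with $u_n\rightharpoonup u_\lambda$ in $E_\lambda$; because each point evaluation $u\mapsto u(x)$ is a continuous linear functional on $E_\lambda$ (one has $|u(x)|\leq\|u\|_{E_\lambda}$), weak convergence yields pointwise convergence $u_n(x)\to u_\lambda(x)$ for every $x\in G$.

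The main obstacle is excluding $u_\lambda\equiv 0$, and this is exactly what forces $\lambda\geq\lambda_0$. Using $(A_2)$, the set $A_M=\{a\leq M\}$ is finite, and the penalization term gives the uniform tail bound
\[\sum_{x\notin A_M} u_n(x)^2 \leq \frac{1}{\lambda M}\int_G \lambda a(x)\,u_n(x)^2\,d\mu \leq \frac{C}{\lambda M},\]
which, together with the pointwise bound $|u_n|\leq C\|u_n\|_{E_\lambda}$, controls every $\ell^q$ tail for $q\geq 2$. Pointwise convergence on the finite set $A_M$ upgrades to $\ell^q$-convergence there. If one assumes $u_\lambda\equiv 0$, then $\limsup_n\|u_n\|_{\frac{2Np}{N+\alpha}}^{2p}\leq C/\lambda$, so HLS gives $\int(R_\alpha\ast|u_n|^p)|u_n|^p$ small for large $\lambda$; the Nehari identity then forces $\|u_n\|_{E_\lambda}^2$ small, hence $J_\lambda(u_n)$ small. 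Since $m_\lambda$ is bounded below uniformly in $\lambda$ by a fixed positive constant (using any admissible test function supported in $\Omega$, whose energy is $\lambda$-independent because $a\equiv 0$ on $\Omega$), this contradicts $J_\lambda(u_n)\to m_\lambda$ once $\lambda\geq\lambda_0$, yielding $u_\lambda\neq 0$.

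Finally, the same tail-plus-pointwise argument lets one pass to the limit in $(J'_\lambda(u_n),\phi)$ for every $\phi\in C_c(G)$, so $u_\lambda$ is a critical point of $J_\lambda$. Weak lower semicontinuity of the quadratic form and convergence of the nonlinear term on the full sequence then yield $u_\lambda\in N_\lambda$ and $J_\lambda(u_\lambda)\leq m_\lambda$, so $u_\lambda$ is a ground state solution of (\ref{1}). The system formulation is immediate: setting $v_\lambda:=R_\alpha\ast|u_\lambda|^p$ and using the Green's function identity $(-\Delta)^{\alpha/2}R_\alpha(\cdot,y)=\delta_y$ coming from (\ref{xm}) gives $(-\Delta)^{\alpha/2}v_\lambda=|u_\lambda|^p$. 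The hard part is the compactness argument of the third paragraph, where the penalization $\lambda a$ together with the finiteness of $A_M$ substitutes for the Rellich-type compactness that is unavailable on the infinite graph $G$.
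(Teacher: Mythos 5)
Your overall strategy (Nehari manifold, mountain-pass geometry, weak limit of a bounded sequence) matches the paper's, but your compactness step is genuinely different and in some ways more economical: instead of the paper's route through the full $(PS)_c$ condition (Proposition \ref{lm13}, built on Lemmas \ref{lm9}--\ref{lm12}, where the defect $v_n=u_n-u$ is shown to be a $(PS)_d$ sequence with $d=0$), you use $(A_2)$ directly to show that all but $O(1/\lambda)$ of the $\ell^2$-mass of $u_n$ sits on the finite set $A_M=\{a\le M\}$, where pointwise convergence suffices, and you rule out $u_\lambda\equiv 0$ by comparing with the $\lambda$-independent lower bound on $m_\lambda$. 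This only yields nontriviality of the weak limit rather than strong convergence $u_n\to u_\lambda$ in $E_\lambda$, but combined with weak lower semicontinuity and the natural-constraint property that is enough for Theorem \ref{th1} (the paper needs the stronger $(PS)_c$ statement again in Section 5, so it proves more here).

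There is, however, one genuine gap: you take a plain minimizing sequence $(u_n)\subset N_\lambda$ and then "pass to the limit in $(J'_\lambda(u_n),\phi)$" to conclude $J'_\lambda(u_\lambda)=0$. Passing to the limit only gives $(J'_\lambda(u_n),\phi)\to(J'_\lambda(u_\lambda),\phi)$; to conclude this limit is zero you need $J'_\lambda(u_n)\to 0$ in $E_\lambda^*$, which a minimizing sequence on the Nehari manifold does not automatically satisfy. You must either invoke Ekeland's variational principle on $J_\lambda|_{N_\lambda}$ together with Lemma \ref{lml} (using that $(F'(u),u)=(2-2p)\|u\|^2_{E_\lambda}<0$ is bounded away from zero), or, as the paper does, produce a $(PS)_{m_\lambda}$ sequence from the mountain-pass geometry. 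Two smaller corrections: (i) your claim that \emph{every} $u\in E_\lambda\setminus\{0\}$ admits a projection $t_uu\in N_\lambda$ fails in this discrete setting, since the convolution in (\ref{bp}) excludes the diagonal and hence $\int_G(R_\alpha\ast|u|^p)|u|^p\,d\mu$ can vanish for $u\neq0$ (e.g.\ $u$ supported at a single vertex; see Remark \ref{f3}(i)); this is why Lemma \ref{lm8} carries the hypothesis $\|u\|^2_{E_\lambda}\le\int_G(R_\alpha\ast|u|^p)|u|^p\,d\mu$. (ii) A test function supported in $\Omega$ gives the \emph{upper} bound $m_\lambda\le m_\Omega$, not the lower bound you need; the uniform positive lower bound on $m_\lambda$ instead follows from $\|u\|_{E_\lambda}\ge\sigma$ on $N_\lambda$ with $\sigma$ independent of $\lambda$ (Lemma \ref{lmo}(i) and Remark \ref{lmc}). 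Both facts are true and the fixes are routine, so your argument survives once the Palais--Smale issue is repaired.
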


In order to study the asymptotic behavior of $u_\lambda$ as $\lambda\rightarrow+\infty$, we introduce the limiting equation defined on the finite potential well $\Omega$
 \begin{equation}\label{2}
\left\{
\begin{array}{ll}
\Delta^{2}u-\Delta u+u=(R_{\alpha}\ast|u|^{p})|u|^{p-2}u, \qquad &\text{in}~\Omega,\\
u=0, \qquad &\text{on}~\partial\Omega.
\end{array}
\right.
\end{equation}

Different from the continuous case, the Laplacian and the gradient form of $u$ on a bounded domain $\Omega$ need additional information on the vertex boundary of $\Omega$, which is defined by $$\partial\Omega=\{y\in G\setminus\Omega:\exists~x\in\Omega~ \text{such~that}~y\sim x \}.$$
We denote $\bar{\Omega}:=\Omega\cup\partial\Omega$.

The space $W^{2,2}(\Omega)$ is defined as the set of all functions $u\in C(\bar{\Omega})$ under the norm
\begin{eqnarray*}
\|u\|_{W^{2,2}(\Omega)}=(\int_{\bar{\Omega}}(|\Delta u|^2+|\nabla u|^2)\,d\mu+\int_{\Omega}u^2\,d\mu)^{\frac{1}{2}}.
\end{eqnarray*}
It is suitable to study the equation (\ref{2}) in the space $E(\Omega):=W^{2,2}(\Omega)\cap H^{1}_0(\Omega)$, where $H^{1}_0(\Omega)$ is the completion of $C_c(\Omega)=\{u\in C(\bar{\Omega}): \text{supp}(u)\subset\Omega\}$ under the norm
\begin{eqnarray*}
\|u\|_{H^{1}_0(\Omega)}=(\int_{\bar{\Omega}}|\nabla u|^2\,d\mu+\int_{\Omega}u^2\,d\mu)^{\frac{1}{2}}.
\end{eqnarray*}
Moreover, we see that $H^{1}_0(\Omega)=C_c(\Omega)$, which differs from the continuous case.

The energy functional $J_{\Omega}(u): E(\Omega)\rightarrow\R$ related to the equation (\ref{2}) is
\begin{eqnarray*}
J_{\Omega}(u)=\frac{1}{2}\int_{\bar{\Omega}}(|\Delta u|^2+|\nabla u|^{2})\,d\mu+\frac{1}{2}\int_{\Omega}u^2\,d\mu-\frac{1}{2p}\int_{\Omega}(R_{\alpha}\ast|u|^p)|u|^p\,d\mu.
\end{eqnarray*}
Similarly, $u\in E(\Omega)$ is a ground state solution of (\ref{2}) means that
$u$ is a nontrivial critical point of $J_\Omega$ with the least energy, that is, $$J_\Omega(u)=\inf\limits_{N_\Omega} J_\Omega=m_\Omega>0,$$
where $N_{\Omega}=\{u\in E(\Omega)\backslash\{0\}: (J'_\Omega(u),u)=0\}$.



The second result is about the concentration behavior of ground state solutions as $\lambda\rightarrow+\infty$.
\begin{thm}\label{th3}
{\rm
Assume that $N\geq 2,\,0<\alpha<N,\,p>\frac{N+\alpha}{N}$ and ($A_1$)-($A_2$) are satisfied. Then for any sequence $\lambda_n\rightarrow+\infty$, up to a subsequence, the corresponding ground state solution $u_{\lambda_n}$ of (\ref{1}) converges in $W^{2,2}(G)$ to a ground state solution $u$ of (\ref{2}).
}

\end{thm}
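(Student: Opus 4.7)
The plan is to extract a weak limit from the sequence $\{u_{\lambda_n}\}$, identify it as a ground state of the limiting problem (\ref{2}), and upgrade weak convergence to strong $W^{2,2}(G)$ convergence by exploiting the deepening potential well via assumption $(A_2)$.

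First I would establish uniform boundedness of $\|u_{\lambda_n}\|_{E_{\lambda_n}}$. Any $\varphi\in E(\Omega)$ extended by zero to $G\setminus\bar\Omega$ lies in $E_{\lambda}$ with $\|\varphi\|_{E_{\lambda}}=\|\varphi\|_{E(\Omega)}$, since $a\equiv 0$ on $\Omega$ and both $\nabla\varphi$ and $\Delta\varphi$ vanish outside $\bar\Omega$ for such a $\varphi$. This gives $N_\Omega\subset N_{\lambda}$ (after extension) and hence $m_\lambda\le m_\Omega$ for every $\lambda>0$. Combining $J_{\lambda_n}(u_{\lambda_n})=m_{\lambda_n}$ with $(J'_{\lambda_n}(u_{\lambda_n}),u_{\lambda_n})=0$ yields
\begin{equation*}
\Big(\tfrac{1}{2}-\tfrac{1}{2p}\Big)\|u_{\lambda_n}\|_{E_{\lambda_n}}^2=m_{\lambda_n}\le m_\Omega,
\end{equation*}
so $\|u_{\lambda_n}\|_{W^{2,2}(G)}\le\|u_{\lambda_n}\|_{E_{\lambda_n}}\le C$. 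Since $\ell^2$-bounded sequences on a countable graph admit pointwise-convergent subsequences, I extract a subsequence (still denoted $u_{\lambda_n}$) converging pointwise on $G$ and weakly in $W^{2,2}(G)$ to some $u$.

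Next I would show that $u$ vanishes outside $\Omega$, so that $u\in E(\Omega)$ after restriction. From the $E_{\lambda_n}$-bound we have $\int_{G}a(x)u_{\lambda_n}^2\,d\mu\le C/\lambda_n\to 0$, and Fatou's lemma applied pointwise yields $\int_{G}a(x)u^2\,d\mu=0$, forcing $u\equiv 0$ on $\{a>0\}=G\setminus\Omega$. Assumption $(A_2)$ now drives the key compactness step: outside the finite set $D=\{a\le M\}$ one has $a(x)>M$, so
\begin{equation*}
\int_{G\setminus D}u_{\lambda_n}^2\,d\mu\le\frac{1}{M\lambda_n}\int_{G}\lambda_n a(x)u_{\lambda_n}^2\,d\mu\le\frac{C}{M\lambda_n}\longrightarrow 0,
\end{equation*}
a uniform tail estimate. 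Pointwise convergence on the finite set $D$ together with this tail estimate gives $u_{\lambda_n}\to u$ strongly in $\ell^2(G)$. Because the Cayley graph has bounded degree $|S|$, the operators $\nabla$ and $\Delta$ are bounded on $\ell^2(G)$, so $\ell^2$-convergence upgrades automatically to convergence in $W^{2,2}(G)$.

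Finally, I would identify $u$ as a ground state of (\ref{2}). Passing to the limit in $(J'_{\lambda_n}(u_{\lambda_n}),\varphi)=0$ for each fixed $\varphi\in C_c(\Omega)$, using strong $\ell^2$ convergence together with the discrete HLS inequality (\ref{bp}) to control the nonlocal term $\int_{G}(R_{\alpha}\ast|u_{\lambda_n}|^p)|u_{\lambda_n}|^{p-2}u_{\lambda_n}\varphi\,d\mu$, shows that $u$ is a weak solution of (\ref{2}). Nontriviality follows from a Nehari lower bound: from $u_{\lambda_n}\in N_{\lambda_n}$ and HLS combined with the discrete $\ell^2\hookrightarrow\ell^q$ embedding ($q\ge 2$) one gets $\|u_{\lambda_n}\|_{E_{\lambda_n}}^2\le C\|u_{\lambda_n}\|_2^{2p}$, so $\|u_{\lambda_n}\|_2\ge c_0>0$ uniformly; the tail estimate confines this mass to the finite set $D$, which passes to the pointwise limit and yields $u\not\equiv 0$. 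By strong convergence, $J_\Omega(u)=\lim_n J_{\lambda_n}(u_{\lambda_n})=\lim_n m_{\lambda_n}\le m_\Omega$, and since $u\in N_\Omega$ we conclude $J_\Omega(u)=m_\Omega$. The main difficulty is the strong $W^{2,2}(G)$ convergence and ruling out loss of mass at infinity: the assumption $(A_2)$ (finiteness of $\{a\le M\}$) plays the role that compact Sobolev embedding on bounded domains plays in the continuous Euclidean setting.
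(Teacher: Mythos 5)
Your argument is correct and reaches the same conclusion, but the mechanism you use for the decisive compactness step is genuinely different from the paper's. The paper first proves $m_\lambda\rightarrow m_\Omega$ (Lemma 5.1) by contradiction: it rules out loss of mass via the discrete Lions lemma (Lemma 2.7), which only yields $u_{\lambda_n}\rightarrow u$ in $\ell^{q}(G)$ for $q>2$, and it then recovers convergence of the norms at the very end through the Br\'{e}zis--Lieb lemma combined with the Nehari identities. You instead get strong $\ell^{2}(G)$ convergence directly from the tail estimate $\int_{G\setminus D}u_{\lambda_n}^2\,d\mu\leq\frac{1}{M\lambda_n}\int_{G}\lambda_n a\,u_{\lambda_n}^2\,d\mu\leq C/(M\lambda_n)$ on the cofinite set where $a>M$ (the same device the paper uses inside Lemma 4.5, but applied with the finite exceptional set $D=\{a\leq M\}$ fixed, which is legitimate here precisely because $\lambda_n\rightarrow+\infty$), and you upgrade to $W^{2,2}(G)$ convergence by the boundedness of $\nabla$ and $\Delta$ on $\ell^{2}$ of a bounded-degree graph. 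This buys you two simplifications: you bypass the Lions lemma and the Br\'{e}zis--Lieb bookkeeping entirely, and you obtain $m_{\lambda_n}\rightarrow m_\Omega$ as a corollary of the chain $J_\Omega(u)=\lim_n m_{\lambda_n}\leq m_\Omega\leq J_\Omega(u)$ rather than needing it as a separate preliminary lemma. One small point you should make explicit: the identity $\lim_n J_{\lambda_n}(u_{\lambda_n})=J_\Omega(u)$ must be read off from the Nehari representation $J_{\lambda_n}(u_{\lambda_n})=(\frac{1}{2}-\frac{1}{2p})\int_{G}(R_{\alpha}\ast|u_{\lambda_n}|^p)|u_{\lambda_n}|^p\,d\mu$ together with the convergence of the nonlocal term (which your $\ell^{2}$, hence $\ell^{\frac{2Np}{N+\alpha}}$, convergence supplies), not from convergence of the quadratic forms, since strong $W^{2,2}(G)$ convergence alone does not control the term $\int_{G}\lambda_n a\,u_{\lambda_n}^2\,d\mu$.
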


\begin{rem}\label{4}
{\rm
\begin{itemize}

\item[(i)]
The range of the parameter $p$ for the existence of ground state solutions is $p>\frac{N+\alpha}{N}$ with $N\geq 2$, which differs from the continuous case $\frac{N+\alpha}{N}<p<\frac{N+\alpha}{N-2}$ with $N\geq 3$, thanks to the discrete nature $\ell^{s}$ into $\ell^{t}$ for any $s<t$.\\

\item[(ii)]  By the discrete HLS inequality (\ref{bz}) on $\mathbb{Z}^N$, the above results can be obtained for the Choquard equations with the Riesz potential $I_\alpha$ on $\mathbb{Z}^N$.\\

\item[(iii)]  In a locally finite graph $(\mathbb{V},\mathbb{E})$,  for the potential $V(x)=(1+\lambda a(x))$, if $a(x)\rightarrow+\infty$ as $d(x,x_0)\rightarrow+\infty$ with some $x_0\in \mathbb{V}$,   the authors in \cite{HSZ,ZZ} first proved a compact embedding $E_\lambda\looparrowright \ell^{q}(\mathbb{V})$ for $q\geq2$. Then they established the existence and convergence of ground state solutions to a class of semilinear Schr\"{o}dinger equations by the  method of Nehari manifold.  Here our assumption on $a(x)$ is weaker, by analyzing the behavior of $(PS)_c$ sequence of the functional $J_\lambda$, we get a compactness result $J_\lambda$ satisfies $(PS)_c$ condition (see Section 4).  Then we obtain similar results by the method of Nehari manifold.

\end{itemize}

}
\end{rem}

This paper is organized as follows. In Section 2, we introduce some basic results that useful in this paper. In section 3, we give some results about the Nehari manifold $N_\lambda$. In Section 4, we first prove a compactness result and then establish the existence of ground state solutions to the equation (\ref{1}). In Section 5, we show the convergence behavior of the ground state solutions to the equation (\ref{1}).

\section{Preliminaries}
 In this section, we introduce some settings for Cayley graphs, and give some useful lemmas.

Let $G$ be a countable group, it is called finitely generated if it has a finite
generating set $S$. We always assume that the generating set $S$ is symmetric, i.e.
$S=S^{-1}$. Then the Cayley graph $(G, S)$ is a locally finite graph $(V,E)$ with the set of
vertices $V = G$ and the set of edges $E=\{x\sim y: x,y\in G, x^{-1}y\in S\}$. The Cayley graph $(G, S)$ is endowed with a
natural metric, called the word metric \cite{BBI}: For any $x,y\in G$, the distance $d^{S}(x,y)$ is defined as the length of the shortest path connecting $x$ and $y$ by assigning each edge of length one, i.e.
$$d^{S}(x,y)=\inf\{k:x=x_0\sim\cdots\sim x_k=y\}.$$
One sees easily that, for any two generating sets $S_1$ and $S_2$, the metrics $d^{S_1}$ and $d^{S_2}$ are bi-Lipschitz equivalent, i.e. there exist two constants $C_1$ and $C_2$ depending on $S_1,S_2$ such that, for any $x,y\in G$,
\begin{equation}\label{jh}
C_1d^{S_2}(x,y)\leq d^{S_1}(x,y)\leq C_2d^{S_2}(x,y).
\end{equation}

Let $B^{S}_{r}(a)=\{x\in G: d^{S}(x,a)\leq r\}$ be the closed ball of radius $r$ centered at $a\in G$ and denote $|B^{S}_{r}(a)|=\sharp B^{S}_{r}(a)$ as the volume (i.e. cardinality) of the set $B^{S}_{r}(a)$. By the group structure, it is obvious that for any $a,b\in G$, $|B^{S}_{r}(a)|=|B^{S}_{r}(b)|$. The growth function of $(G, S)$ is defined as $\beta^{S}(r)=|B^{S}_{r}(e)|$, where $e$ is the unit element of $G$.

A group $G$ is called of polynomial growth, or of polynomial volume growth, if there exists a
finite generating set $S$ such that $\beta^{S}(r)\leq Cr^A$ for any $r\geq1$ and some $A >0 $. By (\ref{jh}), one sees that this definition is independent of the choice of the generating set $S$. Hence, the polynomial growth is indeed a property of the group $G$. By Gromov's theorem and Bass' volume
growth estimate of nilpotent groups \cite{B}, for any group $G$ of polynomial growth, there
are constants $C_1$ and $C_2$ depending on $S$ and $N\in \mathbb{N}$ such that, for any $r\geq 1$,
$$C_1 r^N\leq\beta^{S}(r)\leq C_2 r^N, $$
where the integer $N$ is called the homogeneous dimension or the growth degree of
$G$.

In this paper, we consider the Cayley graph $(G, S)$ of a group of polynomial
growth with the homogeneous dimension $N \geq2$. We denote by $C(\Omega)$ the space of functions on $\Omega\subset G$. For any $u\in C(\Omega)$, the $\ell^p$ norm of $u$ is defined as
 \indent
\[\ \|u\|_{\ell^p(\Omega)}=\left\{\begin{array}{ll} (\sum\limits_{x\in \Omega}|u(x)|^p)^{\frac{1}{p}},\quad &\text {if}~1\leq p<\infty, \\ \underset {x\in \Omega}{\sup}|u(x)|,\quad &\text {if}~p=\infty. \end{array}\right. \]
The $\ell^p(\Omega)$ space is defined as
$$\ell^p(\Omega)=\{u\in C(\Omega):\|u\|_{\ell^p(\Omega)}<+\infty\}.$$
We shall write $\|u\|_p$ instead of $\|u\|_{\ell^p(G)}$ if $\Omega=G$.

For $u,v\in C(G)$, the gradient form $\Gamma,$ called the ``carr\'e du champ" operator, is defined as
\begin{eqnarray*}
\Gamma(u,v)(x)=\frac{1}{2}\underset {y\sim x}{\sum}(u(y)-u(x))(v(y)-v(x))=: \nabla u \nabla v.
\end{eqnarray*}
In particular, we write $\Gamma(u)=\Gamma(u,u)$ and denote the length of $\Gamma(u)$ by
\begin{eqnarray*}
|\nabla u|(x)=\sqrt{\Gamma(u)(x)}=(\frac{1}{2}\underset {y\sim x}{\sum}(u(y)-u(x))^{2})^{\frac{1}{2}}.
\end{eqnarray*}
The Laplacian of $u$ at $x\in G$ is defined as
\begin{eqnarray*}
\Delta u(x)=\underset {y\sim x}{\sum}(u(y)-u(x)).
\end{eqnarray*}

The support of $u\in C(G)$ is
defined as $\text{supp}(u)=\{  x\in G: u(x)\neq 0 \}$. Let $C_c(G)$ be the set of all functions
with finite support. As in \cite{HSZ},  we define the biharmonic operator of $u\in C(G)$ in the distributional sense as
\begin{eqnarray*}
\int_{G}(\Delta^{2}u)\phi\,d\mu=\int_{G}\Delta u\Delta\phi\,d\mu,\quad \phi\in C_{c}(G).
\end{eqnarray*}

\begin{lm}\label{lm0}
{\rm
If $u\in E_\lambda$, then for any $2\leq q<+\infty$, $\|u\|_{q}\leq \|u\|_{E_\lambda}.$

}
\end{lm}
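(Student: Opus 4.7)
The plan is to chain two elementary inequalities: first the embedding $E_\lambda \hookrightarrow \ell^2(G)$ coming directly from the definition of the norm, and then the standard discrete embedding $\ell^2(G) \hookrightarrow \ell^q(G)$ for $q \geq 2$ which holds on any counting-measure space.

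For the first step, I would just read off from the definition of $\|\cdot\|_{E_\lambda}$ that
\begin{equation*}
\|u\|_{E_\lambda}^2 = \int_G \bigl(|\Delta u|^2 + |\nabla u|^2 + (1+\lambda a)u^2\bigr)\,d\mu \;\geq\; \int_G u^2\,d\mu = \|u\|_2^2,
\end{equation*}
since $|\Delta u|^2$, $|\nabla u|^2$, and $\lambda a u^2$ are all nonnegative (recall $a \geq 0$). Hence $\|u\|_2 \leq \|u\|_{E_\lambda}$.

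For the second step, I would exploit the fact that on $G$ with the counting measure, any $\ell^2$-function is pointwise bounded by its $\ell^2$-norm: for every $x \in G$,
\begin{equation*}
|u(x)|^2 \;\leq\; \sum_{y \in G} |u(y)|^2 \;=\; \|u\|_2^2,
\end{equation*}
so $|u(x)| \leq \|u\|_2$. For $q \geq 2$, I then split $|u(x)|^q = |u(x)|^{q-2}\,|u(x)|^2$ and sum:
\begin{equation*}
\|u\|_q^q = \sum_{x \in G} |u(x)|^{q-2}\,|u(x)|^2 \;\leq\; \|u\|_2^{q-2}\sum_{x \in G} |u(x)|^2 = \|u\|_2^q,
\end{equation*}
which gives $\|u\|_q \leq \|u\|_2$. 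Combining with the first step yields $\|u\|_q \leq \|u\|_{E_\lambda}$ for all $q \in [2,\infty)$.

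There is really no obstacle here; the statement is essentially a bookkeeping lemma that records the two facts the rest of the paper will use repeatedly (the trivial embedding of $E_\lambda$ into $\ell^2$, and the discrete nesting $\ell^2 \hookrightarrow \ell^q$). The only point worth mentioning is that the nesting $\ell^s \subset \ell^t$ for $s \leq t$ is specific to the counting measure and fails in the continuous setting, which is exactly the feature Remark (i) after Theorem 1.2 highlights as the reason one can drop the upper bound on $p$.
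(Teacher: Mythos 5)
Your proof is correct and follows the same route as the paper's, which simply chains the embeddings $E_\lambda\hookrightarrow\ell^2(G)$ and $\ell^2(G)\hookrightarrow\ell^q(G)$ without spelling out the details; you have merely filled in the elementary computations behind each step.
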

\begin{proof}
Clearly,  $E_\lambda\hookrightarrow \ell^{2}(G).$  In addition, for any $q\geq 2$, $\ell^{2}(G)\hookrightarrow \ell^{q}(G)$. Then one concludes that $E_\lambda\hookrightarrow \ell^{q}(G),$ which means that $\|u\|_{q}\leq \|u\|_{E_\lambda}.$
\end{proof}

By the method of subordination and Bochner's functional calculus \cite{BBK,SSV},
the fractional Laplace operator on $G$ is defined as
$$(-\Delta)^{\frac{\alpha}{2}}u=\frac{1}{|\Gamma(-\frac{\alpha}{2})|}\int_{0}^{+\infty}(e^{t\Delta}u-u)t^{-1-\frac{\alpha}{2}}\,dt,$$
where $e^{t\Delta}$ is the semigroup of $\Delta$, see \cite{KL}. And the corresponding Green's function $R_{\alpha}$ on $G$ is
$$ R_{\alpha}(x,y)=\frac{1}{|\Gamma(\frac{\alpha}{2})|}\int_{0}^{+\infty}k_t(x,y)t^{-1+\frac{\alpha}{2}}\,dt,\quad x,y\in G,$$
where $k_t(x,y)$ is the heat kernel of the Laplace operator on $G$. Since $G$ with the homogenous dimension
$N$ satisfies the weak Poincar\'{e} inequality, the heat kernel $k_t(x, y)$ has the Gaussian heat kernel bounds \cite{B1}. Hence the Green's function $R_\alpha$ of the fractional Laplacian has the asymptotic behavior $$R_{\alpha}(x,y)\simeq (d^{S}(x,y))^{\alpha-N},\quad N>\alpha.$$
Then we can prove the discrete HLS inequalities on $G$.
\begin{lm}\label{lm1}
{\rm
Let $0<\alpha <N,\,1<r,s<+\infty$ and $\frac{1}{r}+\frac{1}{s}+\frac{N-\alpha}{N}=2$. We have the discrete
HLS inequality
\begin{equation}\label{bo}
\int_{G}(R_\alpha\ast u)(x)v(x)\,d\mu\leq C_{r,s,\alpha,N}\|u\|_r\|v\|_s,\quad u\in \ell^r(G),\,v\in \ell^s(G).
\end{equation}
And an equivalent form is
\begin{equation}\label{p1}
\|R_\alpha\ast u\|_{\frac{Nr}{N-\alpha r}}\leq C_{r,\alpha,N}\|u\|_r,\quad u\in \ell^r(G),
\end{equation}
where $0<\alpha <N,\,1<r<\frac{N}{\alpha}$.
}
\end{lm}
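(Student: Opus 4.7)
The plan is to deduce this lemma directly from the asymptotic estimate for $R_\alpha$ recalled just above, combined with the discrete version of Young's inequality for weak-type spaces (Grafakos, Theorem 1.4.25). The bilinear form (\ref{bo}) and the mapping form (\ref{p1}) are equivalent by H\"older's duality: given (\ref{p1}), one writes $\int_G (R_\alpha \ast u) v \, d\mu \leq \|R_\alpha \ast u\|_{\frac{Nr}{N-\alpha r}} \|v\|_s$ with $s$ the conjugate exponent of $\frac{Nr}{N-\alpha r}$, and a direct computation gives $\frac{1}{r}+\frac{1}{s}+\frac{N-\alpha}{N}=2$. Conversely (\ref{p1}) follows from (\ref{bo}) by duality in $\ell^p$. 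So it is enough to prove (\ref{p1}).

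The key step is to show that the kernel $y\mapsto R_\alpha(x,y)$ lies in the weak space $\ell^{\frac{N}{N-\alpha},\infty}(G)$ with a bound uniform in $x$. Using the asymptotic $R_\alpha(x,y)\simeq d^{S}(x,y)^{\alpha-N}$, the super-level set $\{y\in G : R_\alpha(x,y)>t\}$ is contained in a ball $B^{S}_{\rho(t)}(x)$ with $\rho(t)\simeq t^{-1/(N-\alpha)}$. By the polynomial volume growth $|B^{S}_{r}(x)|\simeq r^{N}$, which is independent of the center $x$ thanks to the group structure, the cardinality of this set is bounded by $C\, t^{-N/(N-\alpha)}$, giving
\[
\sup_{t>0} t \cdot \big|\{y\in G : R_\alpha(x,y)>t\}\big|^{\frac{N-\alpha}{N}} \leq C,
\]
uniformly in $x\in G$. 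This is precisely the uniform bound on the weak $\ell^{\frac{N}{N-\alpha}}$ norm of $R_\alpha(x,\cdot)$.

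I would then invoke the discrete analogue of Young's inequality with weak-type spaces: if $K\in \ell^{p,\infty}(G)$ (uniformly translated) and $u\in \ell^{r}(G)$ with $1<p,r<+\infty$ and $\frac{1}{q}=\frac{1}{p}+\frac{1}{r}-1 \in (0,1)$, then $K\ast u \in \ell^{q}(G)$ with $\|K\ast u\|_{q}\leq C \|K\|_{p,\infty}\|u\|_{r}$. Taking $p=\frac{N}{N-\alpha}$ yields $\frac{1}{q}=\frac{1}{r}-\frac{\alpha}{N}$, i.e. $q=\frac{Nr}{N-\alpha r}$, which requires $1<r<\frac{N}{\alpha}$ so that $q\in(1,\infty)$; this gives (\ref{p1}).

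The only mild obstacle is confirming the weak-type Young inequality in the discrete setting, but this is immediate either by repeating the real-variable proof with counting measure or by viewing $G$ as a discrete space of homogeneous type of dimension $N$ and appealing directly to Grafakos' statement. Apart from that, every ingredient is in hand: the heat kernel Gaussian bounds give the pointwise asymptotic for $R_\alpha$, the polynomial growth gives the weak-type norm of the kernel, and Young plus H\"older deliver both forms of the HLS inequality.
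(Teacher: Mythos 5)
Your proposal is correct and follows essentially the same route as the paper: establish that the kernel $R_\alpha(x,\cdot)$ lies in $\ell^{\frac{N}{N-\alpha},\infty}(G)$ via the asymptotic $R_\alpha(x,y)\simeq d^{S}(x,y)^{\alpha-N}$, apply Young's inequality for weak-type spaces (Grafakos, Theorem 1.4.25) to obtain (\ref{p1}), and then deduce (\ref{bo}) by H\"older's inequality. In fact you supply slightly more detail than the paper does (the super-level-set/volume-growth verification of the weak norm and the exponent arithmetic), which is all consistent with the stated hypotheses.
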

\begin{proof}
Note that $ (d^{S}(x,y))^{\alpha-N}\in \ell^{{\frac{N}{N-\alpha},\infty}}(G)$, where the weak $\ell^{p,\infty}(G)$ space is defined as the set of all the functions $f$ on $G$ such that the quasinorm
$$\|f\|_{\ell^{p,\infty}(G)}=\inf\{C: \mu(\{x\in G:|f(x)|>s\})\leq\frac{C^{p}}{s^{p}}, \,s>0\}$$
is finite. Since the Green's function $R_\alpha$ has the asymptotic behavior $R_{\alpha}(x,y)\simeq (d^{S}(x,y))^{\alpha-N},$
by the Young's inequality for weak type spaces \cite [Theorem~1.4.25]{G}, one gets easily that
\begin{eqnarray*}
\|R_\alpha\ast u\|_{\frac{Nr}{N-\alpha r}}\leq C\|R_\alpha\|_{\ell^{\frac{N}{N-\alpha},\infty}}\|u\|_{r}\leq C\|u\|_{r}.
\end{eqnarray*}
Hence we complete the proof of the inequality (\ref{p1}). The inequality (\ref{bo}) follows from the H\"{o}lder inequality and the inequality (\ref{p1}).

\end{proof}

\begin{rem}\label{rm}
{\rm
For $u\in E_\lambda$ and $p>\frac{N+\alpha}{N}$, by taking $r=s=\frac{2N}{N+\alpha}$ in Lemma \ref{lm1}, we obtain that
\begin{equation}\label{f1}
\int_{G}(R_{\alpha}\ast|u|^p)|u|^p\, \,d\mu\leq C \|u\|_{\frac{2Np}{N+\alpha}}^{2p}\leq C\|u\|^{2p}_{E_\lambda}
\end{equation}
and
\begin{equation}\label{f2}
\|R_\alpha\ast|u|^p\|_{\frac{2N}{N-\alpha}}\leq C\|u^p\|_{\frac{2N}{N+\alpha}}.
\end{equation}

}
\end{rem}

Let $(\Omega, \Sigma, \tau)$ be a measure space, which consists of a set $\Omega$ equipped with a $\sigma-$algebra $\Sigma$ and a Borel measure $\tau:\Sigma\rightarrow[0,+\infty]$. We introduce the classical Br\'{e}zis-Lieb lemma \cite{BL}.

\begin{lm}\label{i}
{\rm(Br\'{e}zis-Lieb lemma)
Let $(\Omega, \Sigma, \tau)$ be a measure space and $\{u_n\}\subset L^{p}(\Omega, \Sigma, \tau)$ with $0<p<+\infty$. If
\begin{itemize}
\item[(a)]
$\{u_n\}$ is uniformly bounded in $L^{p}(\Omega)$,\\

\item[(b)] $u_n\rightarrow u, \tau-$almost everywhere in $\Omega$,
\end{itemize}
then we have that
\begin{eqnarray*}
\underset{n\rightarrow+\infty}{\lim}(\|u_n\|^{p}_{L^{p}(\Omega)}-\|u_n-u\|^{p}_{L^{p}(\Omega)})=\|u\|^{p}_{L^{p}(\Omega)}.
\end{eqnarray*}

}
\end{lm}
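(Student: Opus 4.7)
The plan is to follow the classical Brézis--Lieb strategy built around a single pointwise algebraic inequality combined with the dominated convergence theorem. First, I would verify that the limit $u$ indeed lies in $L^p(\Omega)$: since $u_n \to u$ almost everywhere and $\{u_n\}$ is uniformly bounded in $L^p$, Fatou's lemma applied to $|u_n|^p$ gives $\|u\|_{L^p(\Omega)} \leq \liminf_n \|u_n\|_{L^p(\Omega)} < +\infty$. This also ensures $\|u_n - u\|_{L^p(\Omega)}$ is bounded uniformly in $n$, which is needed at the end.

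The core ingredient is the elementary inequality: for every $\varepsilon > 0$ there exists $C_\varepsilon > 0$ such that for all scalars $a, b$,
\begin{equation*}
\bigl||a+b|^p - |a|^p\bigr| \leq \varepsilon |a|^p + C_\varepsilon |b|^p,
\end{equation*}
which follows from the continuity of $t \mapsto |t+1|^p - |t|^p$ on compact sets and homogeneity (for $p \geq 1$ one can also use convexity; for $0 < p < 1$ subadditivity of $t \mapsto t^p$ suffices, and one chooses $C_\varepsilon$ accordingly). Applying this with $a = u_n - u$ and $b = u$ yields, pointwise a.e.,
\begin{equation*}
W_n^\varepsilon(x) := \Bigl(\bigl| |u_n|^p - |u_n - u|^p - |u|^p \bigr| - \varepsilon |u_n - u|^p \Bigr)_+ \leq (C_\varepsilon + 1)\, |u|^p.
\end{equation*}

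Next, since $u_n \to u$ almost everywhere, $u_n - u \to 0$ a.e., so $|u_n|^p - |u_n-u|^p - |u|^p \to 0$ a.e.\ and consequently $W_n^\varepsilon(x) \to 0$ a.e. The dominating function $(C_\varepsilon+1)|u|^p$ lies in $L^1(\Omega)$ because $u \in L^p(\Omega)$, so Lebesgue's dominated convergence theorem gives $\int_\Omega W_n^\varepsilon \, d\tau \to 0$ as $n \to \infty$. Using
\begin{equation*}
\bigl| |u_n|^p - |u_n - u|^p - |u|^p \bigr| \leq W_n^\varepsilon + \varepsilon |u_n - u|^p,
\end{equation*}
integrating, and letting $n \to \infty$, we obtain
\begin{equation*}
\limsup_{n \to \infty} \left| \|u_n\|_{L^p}^p - \|u_n - u\|_{L^p}^p - \|u\|_{L^p}^p \right| \leq \varepsilon \sup_n \|u_n - u\|_{L^p}^p.
\end{equation*}

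Since $\sup_n \|u_n - u\|_{L^p}^p$ is finite by the first step and $\varepsilon > 0$ is arbitrary, the left-hand side vanishes, which is exactly the claim. The main (and only real) obstacle is justifying the pointwise inequality uniformly in $a,b$; once it is in hand, everything reduces to dominated convergence. I would likely spell out the $p\geq 1$ case using Young's inequality $|a+b|^p \leq (1+\delta)^{p-1}|a|^p + (1+\delta^{-1})^{p-1}|b|^p$ with $\delta$ small, and remark on the $0<p<1$ case separately using $|x+y|^p \leq |x|^p + |y|^p$.
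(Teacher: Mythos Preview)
The paper does not supply its own proof of this lemma; it simply quotes it as the classical result of Br\'ezis and Lieb \cite{BL} and moves on. Your argument is correct and is precisely the original one from \cite{BL}: the Fatou step to secure $u\in L^p$, the key pointwise inequality $\bigl||a+b|^p-|a|^p\bigr|\le \varepsilon|a|^p+C_\varepsilon|b|^p$, the definition of the truncated defect $W_n^\varepsilon$, and the dominated convergence conclusion are exactly how Br\'ezis and Lieb proceed. Nothing is missing.
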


\begin{rem}\label{lm4}
{\rm
\begin{itemize}
\item[(1)]
An easy variant of the Brezis-Lieb lemma is that for $q\in[1,+\infty)$, if $\{u_n\}\subset L^{p}(\Omega)$ with $p\geq 1$ satisfies conditions (a) and (b) in Lemma \ref{i}, then for any $q\in[1,p]$,
\begin{eqnarray*}
\underset{n\rightarrow+\infty}{\lim}\int_{\Omega}||u_n|^{q}-|u_n-u|^{q}-|u|^{q}|^{\frac{p}{q}}\,d\tau=0.
\end{eqnarray*}
\\
\item[(2)] If $\Omega$ is countable and $\tau$ is the counting measure $\mu$ in $\Omega$, then we get a discrete version of the Br\'{e}zis-Lieb lemma.

\end{itemize}

}
\end{rem}



Now we prove a Br\'{e}zis-Lieb lemma for the nonlocal term $\int_{G}(R_\alpha\ast|u|^{p})|u|^{p}\, d\mu$.
\begin{lm}\label{lm3}
{\rm
Assume that $N\geq 2,\,0<\alpha<N$ and $p>\frac{N+\alpha}{N}$. Let $\{u_n\}\subset E_\lambda$ be a bounded sequence in $\ell^{\frac{2Np}{N+\alpha}}(G)$ and $u_n\rightarrow u$ pointwise in $G$. Then we have that
\begin{eqnarray*}
\underset{n\rightarrow+\infty}{\lim}(\int_{G}(R_\alpha\ast|u_n|^{p})|u_n|^{p}\,d\mu
-\int_{G}(R_\alpha\ast|u_n-u|^{p})|u_n-u|^{p}\,d\mu)=\int_{G}(R_\alpha\ast|u|^{p})|u|^{p}\,d\mu.
\end{eqnarray*}

}
\end{lm}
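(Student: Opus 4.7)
The plan is to reduce the nonlocal Br\'ezis--Lieb statement to the pointwise Br\'ezis--Lieb lemma plus the discrete HLS inequality, together with a weak-convergence/duality argument. Set
\[
f_n := |u_n|^p, \qquad g_n := |u_n-u|^p, \qquad h := |u|^p, \qquad \epsilon_n := f_n - g_n - h.
\]
Since $\{u_n\}$ is bounded in $\ell^{\frac{2Np}{N+\alpha}}(G)$ and $u_n\to u$ pointwise, applying the variant of the Br\'ezis--Lieb lemma in Remark \ref{lm4} with outer exponent $\frac{2Np}{N+\alpha}$ and inner exponent $q=p$ (noting $p\le \frac{2Np}{N+\alpha}$ since $\alpha<N$) yields $\|\epsilon_n\|_{\frac{2N}{N+\alpha}}\to 0$. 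In particular $\{f_n\}$ and $\{g_n\}$ are bounded in $\ell^{\frac{2N}{N+\alpha}}(G)$, and $h\in\ell^{\frac{2N}{N+\alpha}}(G)$.

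Using the symmetry of $R_\alpha$ and bilinearity, direct expansion gives
\[
\int_G(R_\alpha\ast f_n)f_n\,d\mu-\int_G(R_\alpha\ast g_n)g_n\,d\mu-\int_G(R_\alpha\ast h)h\,d\mu
= 2\int_G(R_\alpha\ast g_n)h\,d\mu + \mathrm{Rem}_n,
\]
where $\mathrm{Rem}_n$ is a finite sum of terms of the form $\int_G(R_\alpha\ast A_n)B_n\,d\mu$ with $\{A_n\}, \{B_n\}\subset\{g_n,h,\epsilon_n\}$ and at least one of $A_n, B_n$ equal to $\epsilon_n$. Applying the discrete HLS inequality \eqref{bo} with $r=s=\frac{2N}{N+\alpha}$ (which satisfies $\frac{1}{r}+\frac{1}{s}+\frac{N-\alpha}{N}=2$) bounds each such term by $C\|A_n\|_{\frac{2N}{N+\alpha}}\|B_n\|_{\frac{2N}{N+\alpha}}$, and since the companion factor stays uniformly bounded in $\ell^{\frac{2N}{N+\alpha}}(G)$ while $\|\epsilon_n\|_{\frac{2N}{N+\alpha}}\to 0$, we conclude $\mathrm{Rem}_n\to 0$.

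The main obstacle, and the only non-routine step, is to show that the cross term
\[
\int_G(R_\alpha\ast g_n)\,h\,d\mu \;=\; \int_G g_n\,(R_\alpha\ast h)\,d\mu
\]
tends to zero. For this I would argue as follows. Because $u_n\to u$ pointwise, $g_n=|u_n-u|^p\to 0$ pointwise on $G$; combined with boundedness in the reflexive space $\ell^{\frac{2N}{N+\alpha}}(G)$ (a standard subsequence argument identifies the weak limit with the pointwise limit), we get $g_n\rightharpoonup 0$ weakly in $\ell^{\frac{2N}{N+\alpha}}(G)$. On the other hand, by the equivalent form \eqref{p1} of HLS applied to $h\in\ell^{\frac{2N}{N+\alpha}}(G)$, we have $R_\alpha\ast h\in\ell^{\frac{2N}{N-\alpha}}(G)$, and this is precisely the dual space of $\ell^{\frac{2N}{N+\alpha}}(G)$ since $\frac{N+\alpha}{2N}+\frac{N-\alpha}{2N}=1$. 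Hence the duality pairing gives $\int_G g_n(R_\alpha\ast h)\,d\mu\to 0$, which combined with the previous paragraph completes the proof. The delicate point is ensuring that the test function produced from $h$ lands in exactly the dual exponent dictated by HLS, which is why the hypothesis $p>\frac{N+\alpha}{N}$ enters only through the choice $r=s=\frac{2N}{N+\alpha}$ in HLS.
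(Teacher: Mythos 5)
Your proposal is correct and follows essentially the same route as the paper: both reduce the statement to the Br\'ezis--Lieb variant of Remark \ref{lm4} (giving $|u_n|^p-|u_n-u|^p\to|u|^p$ in $\ell^{\frac{2N}{N+\alpha}}(G)$), control the error terms by the discrete HLS inequality with $r=s=\frac{2N}{N+\alpha}$, and kill the cross term by pairing the weak limit $|u_n-u|^p\rightharpoonup 0$ in $\ell^{\frac{2N}{N+\alpha}}(G)$ against an element of the dual space $\ell^{\frac{2N}{N-\alpha}}(G)$ produced by \eqref{p1}. The only difference is cosmetic: you expand around $g_n+h+\epsilon_n$ and test the cross term against the fixed function $R_\alpha\ast|u|^p$, whereas the paper groups the same terms as $I+2II$ and tests against the strongly convergent sequence $R_\alpha\ast(|u_n|^p-|u_n-u|^p)$.
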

\begin{proof}
Direct calculation yields that
\begin{eqnarray*}
\int_{G}(R_\alpha\ast|u_n|^{p})|u_n|^{p}\,d\mu&=&\int_{G}(R_\alpha\ast|u_n-u|^{p})|u_n-u|^{p}\,d\mu\\&&+
\int_{G}(R_\alpha\ast(|u_n|^{p}-|u_n-u|^{p}))(|u_n|^{p}-|u_n-u|^{p})\,d\mu\\&& +2\int_{G}(R_\alpha\ast(|u_n|^{p}-|u_n-u|^{p}))|u_n-u|^{p} \,d\mu\\&=&\int_{G}(R_\alpha\ast|u_n-u|^{p})|u_n-u|^{p}\,d\mu+I+2II,
\end{eqnarray*}
where
$$I=\int_{G}(R_\alpha\ast(|u_n|^{p}-|u_n-u|^{p}))(|u_n|^{p}-|u_n-u|^{p})\,d\mu,\quad
II=\int_{G}(R_\alpha\ast(|u_n|^{p}-|u_n-u|^{p}))|u_n-u|^{p} \,d\mu.$$

By taking $r=\frac{2Np}{N+\alpha}$ and $q=p$ in Remark \ref{lm4} (1), we get that
\begin{equation}\label{xl}
|u_n|^{p}-|u_n-u|^{p}\rightarrow|u|^{p},\quad \text{in}~\ell^{\frac{2N}{N+\alpha}}(G).
\end{equation}

For $I$, since $\{u_n\}$ is bounded in $\ell^{\frac{2Np}{N+\alpha}}(G)$, by the HLS inequality (\ref{bo}) and (\ref{xl}), one gets that
\begin{eqnarray*}
|I- \int_{G}(R_\alpha\ast|u|^{p})|u|^{p}\, d\mu|&=&|\int_{G}(R_\alpha\ast(|u_n|^{p}-|u_n-u|^{p}-|u|^{p})(|u_n|^{p}-|u_n-u|^{p})\,d\mu\\&&+
\int_{G}(R_\alpha\ast|u|^{p})(|u_n|^{p}-|u_n-u|^{p}-|u|^{p})\,d\mu|\\&\leq&C(\||u_n|^{p}-|u_n-u|^{p}\|_{\frac{2N}{N+\alpha}}+\|u\|^{p}_{\frac{2Np}{N+\alpha}})\||u_n|^{p}-|u_n-u|^{p}-|u|^{p}\|_{\frac{2N}{N+\alpha}}\\&\rightarrow&0.
\end{eqnarray*}

For $II$, since $u_n\rightarrow u$ pointwise in $G$, passing to a subsequence if necessary, we obtain that
\begin{equation}\label{90}
|u_n-u|^{p}\rightharpoonup 0,\quad \text{in}~ \ell^{\frac{2N}{N+\alpha}}(G).
\end{equation}
Moreover, by the HLS inequality (\ref{p1}) and (\ref{xl}), one has that
\begin{equation}\label{s0}
R_\alpha\ast(|u_n|^{p}-|u_n-u|^{p})\rightarrow R_\alpha\ast|u|^{p},\qquad \text{in}~\ell^{\frac{2N}{N-\alpha}}(G).
\end{equation}
Then $II\rightarrow0$ follows from (\ref{90}) and (\ref{s0}).

\end{proof}

At the end of this section, we give a discrete Lions lemma corresponding to Lions \cite{L1} on $\mathbb{R}^{N}$, which denies a sequence $\{u_n\}$ to distribute itself over $G$.
\begin{lm}\label{lm16}
{\rm(Lions lemma)
Let $1\leq s<+\infty$. Assume that $\{u_n\}$ is bounded in $\ell^{s}(G)$ and $\|u_{n}\|_{\infty}\rightarrow0$ as $n\rightarrow+\infty.$
Then for any $s<t<+\infty$, as $n\rightarrow+\infty,$
\begin{eqnarray*}
u_n\rightarrow0,\qquad \text{in}~\ell^{t}(G).
\end{eqnarray*}}
\end{lm}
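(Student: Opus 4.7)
The plan is to exploit the basic fact that on a discrete space with counting measure, $\ell^s$ embeds continuously into $\ell^t$ whenever $s \leq t$, together with a simple pointwise interpolation between $\ell^s$ and $\ell^\infty$. In the continuous Euclidean setting Lions's original lemma is delicate because functions can spread mass over large balls while staying $L^\infty$-small, and one must rule this out via a concentration-compactness/covering argument on balls of fixed radius. On a graph with the counting measure this difficulty disappears entirely: mass cannot ``leak to small pointwise values on many disjoint places'' without losing $\ell^s$-boundedness, because each point carries mass at least $|u_n(x)|^s$.

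Concretely, given $s < t < +\infty$, I would write, for each $n$,
\begin{eqnarray*}
\|u_n\|_t^t \;=\; \sum_{x\in G} |u_n(x)|^t \;=\; \sum_{x\in G} |u_n(x)|^{t-s}\,|u_n(x)|^s \;\leq\; \|u_n\|_\infty^{\,t-s}\,\|u_n\|_s^s.
\end{eqnarray*}
Since $t-s>0$, the hypothesis $\|u_n\|_\infty \to 0$ makes the first factor tend to $0$, while the $\ell^s$-boundedness of $\{u_n\}$ keeps the second factor controlled by a constant. Taking the $1/t$-th power gives $\|u_n\|_t \to 0$, which is exactly the desired conclusion $u_n \to 0$ in $\ell^t(G)$.

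There is essentially no obstacle: the only thing to verify is the elementary inequality $|u_n(x)|^{t-s} \leq \|u_n\|_\infty^{t-s}$, which is immediate. No covering, no partition of $G$ into balls, and no use of the polynomial-growth assumption is needed; the statement is purely a pointwise/summability fact. The lemma is therefore one line once one recognizes that the $\ell^\infty$-norm controls the pointwise excess power $|u_n|^{t-s}$ uniformly in $x$.
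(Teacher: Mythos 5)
Your proof is correct and is essentially identical to the paper's: both rest on the one-line interpolation bound $\|u_n\|_t^t \leq \|u_n\|_\infty^{t-s}\|u_n\|_s^s$ (the paper writes the exponent as $s-t$, an evident typo, whereas yours is stated correctly). No further comment is needed.
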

\begin{proof}
For $s<t<+\infty$, this result follows from the interpolation inequality
\begin{eqnarray*}
\|u_n\|^{t}_{t}\leq\|u_n\|_{s}^{s}\|u_n\|_{\infty}^{s-t}.
\end{eqnarray*}

\end{proof}

\section{The Neahari manifold $N_\lambda$}
 In this section, we give some results about the Nehari manifold $N_\lambda$.
 For convenience, we denote $F(u):=(J_\lambda'(u),u)$.
\begin{lm}\label{lmo}
{\rm Assume that $N\geq 2,\,0<\alpha<N$ and $p>\frac{N+\alpha}{N}$.
Then for any $u\in N_\lambda$,
\begin{itemize}
\item[(i)]
there exists $\sigma>0$ such that $\|u\|_{E_\lambda}\geq \sigma$;\\
\item[(ii)] $N_\lambda$ is a $C^{1}-$manifold. 
\end{itemize}

}
\end{lm}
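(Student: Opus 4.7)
The plan is to derive both parts from the Nehari identity together with the HLS control recorded in Remark \ref{rm}. For any $u \in N_\lambda$ the Nehari condition reads
$$\|u\|_{E_\lambda}^{2} \;=\; \int_{G}(R_\alpha * |u|^{p})|u|^{p}\,d\mu,$$
and the right-hand side is bounded above by $C\|u\|_{E_\lambda}^{2p}$ thanks to (\ref{f1}). Rearranging gives $\|u\|_{E_\lambda}^{2p-2} \geq 1/C$, and since $p > \frac{N+\alpha}{N} > 1$, this yields the uniform lower bound $\sigma := C^{-1/(2p-2)} > 0$, which proves (i).

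For (ii), I would realize $N_\lambda$ as the zero set of $F(u) := (J'_\lambda(u), u)$ on the open subset $E_\lambda \setminus \{0\}$ of $E_\lambda$ and apply the regular level set theorem (implicit function theorem). Because $J_\lambda \in C^{1}(E_\lambda,\mathbb{R})$ with the Choquard term differentiable through (\ref{f1}), $F$ is itself $C^{1}$. Exploiting that $\|\cdot\|_{E_\lambda}^{2}$ is $2$-homogeneous while $u \mapsto \int_{G}(R_\alpha*|u|^{p})|u|^{p}\,d\mu$ is $2p$-homogeneous, a direct differentiation yields
$$(F'(u), u) \;=\; 2\|u\|_{E_\lambda}^{2} \;-\; 2p\int_{G}(R_\alpha*|u|^{p})|u|^{p}\,d\mu.$$
On $N_\lambda$ the two integrals coincide, so
$$(F'(u), u) \;=\; (2-2p)\|u\|_{E_\lambda}^{2} \;\leq\; (2-2p)\sigma^{2} \;<\; 0$$
by part (i). In particular $F'(u) \ne 0$ in $E_\lambda^{\ast}$, and being real-valued this functional is automatically surjective onto $\mathbb{R}$. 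The implicit function theorem then shows that $N_\lambda = F^{-1}(0) \cap (E_\lambda \setminus \{0\})$ is a $C^{1}$-submanifold of $E_\lambda \setminus \{0\}$ of codimension one.

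There is no serious obstacle here: both assertions reduce to the single algebraic fact that the two pieces of $F$ have distinct homogeneity degrees $2$ and $2p > 2$. The genuinely analytic ingredients — the $C^{1}$ regularity of $J_\lambda$ and the HLS bound (\ref{f1}) — have already been set up in Sections 1--2, so the argument is essentially a two-line chain of estimates for (i) and a homogeneity-based computation for (ii).
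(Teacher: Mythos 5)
Your proposal is correct and follows essentially the same route as the paper: part (i) is the identical HLS estimate $\|u\|_{E_\lambda}^2\le C\|u\|_{E_\lambda}^{2p}$ rearranged using $p>1$, and part (ii) is the same homogeneity computation giving $(F'(u),u)=(2-2p)\|u\|_{E_\lambda}^2<0$ followed by the implicit function theorem. The only difference is cosmetic: you make the value $\sigma=C^{-1/(2p-2)}$ explicit.
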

\begin{proof}

(i) For $u\in N_\lambda$, by the HLS inequality (\ref{f1}), one has that
\begin{eqnarray*}
0=F(u)=\|u\|^{2}_{E_\lambda}-\int_{G}(R_{\alpha}\ast|u|^p)|u|^p\,d\mu\geq
\|u\|^{2}_{E_\lambda}-C\|u\|^{2p}_{E_\lambda}.
\end{eqnarray*}
Since $p>1$, there exists $\sigma>0$ such that $\|u\|_{E_\lambda}\geq\sigma$.\\

(ii) For $u\in N_\lambda$, by (i), we have that
\begin{eqnarray*}
(F'(u),u)&=&2\|u\|^{2}_{E_\lambda}-2p\int_{G}(R_{\alpha}\ast|u|^{p})|u|^{p}\,d\mu\\&=&
(2-2p)\|u\|^{2}_{E_\lambda}\\&\leq&(2-2p)\sigma^{2}\\&<&0.
\end{eqnarray*}
Hence $N_\lambda=\{u\in E_\lambda\setminus\{0\}: F(u)=0\}$ is a $C^{1}-$manifold by the implicit function theorem.
\end{proof}

\begin{rem}\label{lmc}
{\rm
The result (i) of Lemma \ref{lmo} implies that
\begin{eqnarray*}
m_\lambda=\inf\limits_{u\in N_\lambda} J_\lambda(u)=(\frac{1}{2}-\frac{1}{2p})\inf\limits_{u\in N_\lambda}\|u\|^{2}_{E_\lambda}\geq(\frac{1}{2}-\frac{1}{2p})\sigma^2>0.
\end{eqnarray*}

}
\end{rem}

\begin{lm}\label{lml}
{\rm Assume that $N\geq 2,\,0<\alpha<N$ and $p>\frac{N+\alpha}{N}$.
Then $u$ is a nonzero critical point of $J_\lambda$ if and only if $u$ is a critical point of $J_\lambda|_{N_\lambda}$.

}
\end{lm}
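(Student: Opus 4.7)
The plan is to prove the two implications separately, with the nontrivial one being the backward direction, where the Lagrange multiplier rule combined with the strict negativity of $(F'(u),u)$ on $N_\lambda$ (already computed in Lemma \ref{lmo}(ii)) forces the multiplier to vanish.

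For the forward implication, I would simply observe that if $u \neq 0$ satisfies $J'_\lambda(u) = 0$ in $E_\lambda^*$, then in particular $F(u) = (J'_\lambda(u),u) = 0$, so $u \in N_\lambda$; and since the full differential vanishes, its restriction to the tangent space $T_u N_\lambda$ vanishes too. This is a one-line argument.

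For the backward direction, suppose $u$ is a critical point of $J_\lambda|_{N_\lambda}$. Since Lemma \ref{lmo}(ii) shows that $N_\lambda$ is a $C^1$-manifold cut out by the level set $\{F = 0\}$ with $F'(u) \neq 0$ on $N_\lambda$, the Lagrange multiplier theorem produces $\nu \in \R$ such that
\begin{equation*}
J'_\lambda(u) = \nu\, F'(u).
\end{equation*}
Pairing both sides with $u$ and using $u \in N_\lambda$ (so $(J'_\lambda(u),u) = F(u) = 0$), together with the identity from the proof of Lemma \ref{lmo}(ii)
\begin{equation*}
(F'(u),u) = (2-2p)\|u\|^2_{E_\lambda} \le (2-2p)\sigma^2 < 0,
\end{equation*}
we obtain $\nu \cdot (2-2p)\|u\|_{E_\lambda}^2 = 0$, forcing $\nu = 0$ and hence $J'_\lambda(u) = 0$ in $E_\lambda^*$.

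There is no real obstacle here: the whole argument rests on the nondegeneracy of $F$ along the radial direction $u$, which was already the content of Lemma \ref{lmo}(ii) and which guarantees both that $N_\lambda$ is a smooth manifold and that the Lagrange multiplier is forced to be zero. The only point worth being careful about is verifying that the hypothesis $u \neq 0$ is indeed built into the definition of $N_\lambda$, so that statements like $\|u\|_{E_\lambda} \ge \sigma > 0$ from Lemma \ref{lmo}(i) apply and make $(F'(u),u)$ strictly negative rather than merely nonpositive.
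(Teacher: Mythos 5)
Your proof is correct and follows essentially the same route as the paper: the forward direction is the same one-line observation, and the backward direction applies the Lagrange multiplier rule, pairs with $u$, and uses $(F'(u),u)=(2-2p)\|u\|^2_{E_\lambda}<0$ from Lemma \ref{lmo}(ii) to force the multiplier to vanish. No gaps.
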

\begin{proof}
If $u$ is a nonzero critical point of $J_\lambda$, clearly  $u\in N_\lambda$ is a critical point of $J_\lambda|_{N_\lambda}$.

If $u\in N_\lambda$ is a critical point of $J_\lambda|_{N_\lambda}$, then  there exists $\eta\in\R$, a Lagrange multiplier, such that
$$J'_\lambda(u)|_{N_\lambda}=J'_\lambda(u)-\eta F'(u)=0.$$
Hence we have that
$$\eta(F'(u),u)=(J'_\lambda(u),u)=0.$$ By the result (ii) of Lemma \ref{lmo}, we get that $\eta=0$. Therefore,  $J'_\lambda(u)=0$.
\end{proof}

\begin{lm}\label{lm8}
{\rm
Assume that $N\geq 2,\,0<\alpha<N$ and $p>\frac{N+\alpha}{N}$.
For $u\in E_\lambda\setminus \{0\}$ satisfying $\|u\|^{2}_{E_\lambda}\leq\int_{G}(R_{\alpha}\ast|u|^p)|u|^{p}\,d\mu$, there exists a unique $t_u\in(0,1]$ such that $t_u u\in N_\lambda$.
}
\end{lm}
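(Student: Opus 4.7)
The plan is to reduce the assertion to solving a single scalar equation by looking at the fiber map $g(t) := (J'_\lambda(tu), tu)$ for $t > 0$. A direct computation, using the homogeneity of each term in $F$, gives
\begin{equation*}
g(t) = t^{2}\|u\|^{2}_{E_\lambda} - t^{2p}\int_{G}(R_{\alpha}\ast|u|^{p})|u|^{p}\,d\mu,
\end{equation*}
so that $tu \in N_\lambda$ if and only if $g(t) = 0$ with $t > 0$, i.e.\ if and only if
\begin{equation*}
t^{2p-2} = \frac{\|u\|^{2}_{E_\lambda}}{\int_{G}(R_{\alpha}\ast|u|^{p})|u|^{p}\,d\mu}.
\end{equation*}

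Next I would verify that this equation has a well-defined positive solution. Since $u \neq 0$, we have $\|u\|^{2}_{E_\lambda} > 0$, and by the standing hypothesis $\|u\|^{2}_{E_\lambda} \leq \int_{G}(R_{\alpha}\ast|u|^{p})|u|^{p}\,d\mu$, the right-hand side is therefore both strictly positive and bounded above by $1$. Because $p > \tfrac{N+\alpha}{N} > 1$ we have $2p - 2 > 0$, so the map $t \mapsto t^{2p-2}$ is a strictly increasing bijection from $(0, +\infty)$ onto itself. This yields a unique $t_u > 0$ solving the displayed equation, and the bound on the ratio immediately gives $t_u \in (0, 1]$.

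There is no real obstacle here: the only point worth flagging is that one must know the denominator $\int_{G}(R_{\alpha}\ast|u|^{p})|u|^{p}\,d\mu$ is strictly positive before dividing, which is forced by the hypothesis together with $u \neq 0$. The strict monotonicity of $t \mapsto t^{2p-2}$ (equivalently, the strict concavity/convexity pattern of $g$) simultaneously delivers both existence and uniqueness, closing the argument.
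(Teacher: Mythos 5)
Your proof is correct and follows essentially the same route as the paper: the paper's key step is exactly the same explicit solution $t_0=\bigl(\|u\|^{2}_{E_\lambda}/\int_{G}(R_{\alpha}\ast|u|^{p})|u|^{p}\,d\mu\bigr)^{\frac{1}{2(p-1)}}$ of the fiber equation, with the hypothesis forcing the ratio into $(0,1]$. You simply omit the paper's preliminary (and logically unnecessary) discussion of the shape of $t\mapsto J_\lambda(tu)$, which makes your version slightly cleaner without losing anything.
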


\begin{proof}
Let $u\in E_\lambda\setminus \{0)$ satisfying $0<\|u\|^{2}_{E_\lambda}\leq\int_{G}(R_{\alpha}\ast|u|^p)|u|^{p}\,d\mu$ be fixed. For $t>0$,
\begin{eqnarray*}
f(t):=J_\lambda(tu)=\frac{t^2}{2}\|u\|^{2}_{E_\lambda}-\frac{t^{2p}}{2p}\int_{G}(R_{\alpha}\ast|u|^p)|u|^{p}\,d\mu.
\end{eqnarray*}
Since $p>1$, we get that
$$f(t)\leq(\frac{t^2}{2}-\frac{t^{2p}}{2p})\int_{G}(R_{\alpha}\ast|u|^p)|u|^{p}\,d\mu\rightarrow-\infty,\quad t\rightarrow+\infty.$$
By the HLS inequality (\ref{f1}), we have that $$f(t)\geq\frac{t^2}{2}\|u\|^{2}_{E_\lambda}-\frac{t^{2p}}{2p}C\|u\|^{2p}_{E_\lambda}.$$
It is clear that $f(t)>0$ if $t>0$ is small enough.
Hence there exists $t_0>0$ such that $f(t)$ has a positive maximum and $f'(t_0)=0$. Moreover, if $tu\in N_\lambda$, then $t$ must satisfy
\begin{equation}\label{cc}
(J'_\lambda(tu),tu)=t^2\|u\|^{2}_{E_\lambda}-t^{2p}\int_{G}(R_{\alpha}\ast|u|^p)|u|^{p}\,d\mu=0.
\end{equation}
Clearly, the equation (\ref{cc}) has a unique solution $t_0=(\frac{\|u\|^{2}_{E_\lambda}}{\int_{G}(R_{\alpha}\ast|u|^p)|u|^{p}\,d\mu})^{\frac{1}{2(p-1)}}\in(0,1]$. By choosing $t_u=t_0$, we get that $t_u u\in N_\lambda$.

\end{proof}

\begin{rem}\label{f3}
{\rm
\begin{itemize}
\item[(i)] Different from the continuous case, $u\in E_\lambda\setminus \{0\}$ does not imply that $\int_{G}(R_{\alpha}\ast|u|^{p})|u|^{p}\,d\mu\neq0$. For example, $u(e)=1,\,u(x)=0$ for $x\neq e$, where $e$ is the unit element of $G$.\\

\item[(ii)] By similar arguments as in Lemma \ref{lm8}, we have that for $u\in E_\Omega\setminus \{0\}$ satisfying $\|u\|^{2}_{E_\Omega}\leq\int_{\Omega}(R_{\alpha}\ast|u|^p)|u|^{p}\,d\mu$, there exists a unique $t_u\in(0,1]$ such that $t_u u\in N_\Omega$.
\end{itemize}
}
\end{rem}

Finally, we show that the functional $J_{\lambda}(u)$ satisfies the mountain-pass geometry.
\begin{lm}\label{lm6}
{\rm
Assume that $N\geq 2,\,0<\alpha<N$ and $p>\frac{N+\alpha}{N}$. Then
\begin{itemize}
\item[(i)] there exist $\theta, \rho>0$ such that $J_{\lambda}(u)\geq\theta>0$ for $\|u\|_{E_\lambda}=\rho$;\\
\item[(ii)] there exists $e\in E_\lambda$ with $\|e\|_{E_\lambda}>\rho$ such that $J_{\lambda}(e)< 0$.
\end{itemize}
}
\end{lm}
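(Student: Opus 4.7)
My plan is to prove both parts by direct estimation using the HLS inequality (\ref{f1}) together with the structure of $J_\lambda$ as a combination of a quadratic term and a $2p$-homogeneous term with $2p>2$.

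For (i), I would start from
$$J_\lambda(u)=\frac{1}{2}\|u\|^2_{E_\lambda}-\frac{1}{2p}\int_{G}(R_\alpha\ast|u|^p)|u|^p\,d\mu$$
and apply the estimate $\int_{G}(R_\alpha\ast|u|^p)|u|^p\,d\mu\leq C\|u\|^{2p}_{E_\lambda}$ from (\ref{f1}), obtaining
$$J_\lambda(u)\geq\|u\|^2_{E_\lambda}\left(\frac{1}{2}-\frac{C}{2p}\|u\|^{2p-2}_{E_\lambda}\right).$$
Because $p>\frac{N+\alpha}{N}\geq 1$ gives $2p-2>0$, the bracket stays above $\frac{1}{4}$ once $\|u\|_{E_\lambda}=\rho$ is chosen small enough, for instance $\rho^{2p-2}\leq p/(2C)$. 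Setting $\theta:=\rho^2/4$ yields the desired lower bound on the sphere $\{\|u\|_{E_\lambda}=\rho\}$.

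For (ii), the idea is to exhibit a fixed direction in $E_\lambda$ along which $J_\lambda(tu)\to-\infty$ as $t\to+\infty$. I would choose any $u_0\in E_\lambda\setminus\{0\}$ whose support contains at least two distinct vertices of $G$, so that the double sum
$$D(u_0):=\int_{G}(R_\alpha\ast|u_0|^p)|u_0|^p\,d\mu=\sum_{x\neq y}R_\alpha(x,y)|u_0(y)|^p|u_0(x)|^p$$
is strictly positive, using the pointwise positivity $R_\alpha(x,y)\simeq(d^S(x,y))^{\alpha-N}>0$ for $x\neq y$. Then
$$J_\lambda(tu_0)=\frac{t^2}{2}\|u_0\|^2_{E_\lambda}-\frac{t^{2p}}{2p}D(u_0)\longrightarrow-\infty\quad\text{as}~t\to+\infty,$$
since $2p>2$. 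Choosing $e:=t_0u_0$ with $t_0$ so large that $\|e\|_{E_\lambda}>\rho$ and $J_\lambda(e)<0$ completes the argument.

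Neither step is a serious obstacle; the only delicate point, and the reason this deserves more than a one-line remark, is the caveat recorded in Remark \ref{f3}(i): if $u_0$ is supported at a single vertex of $G$ then $D(u_0)=0$ and the negative term disappears entirely, so the escape-to-$-\infty$ argument would fail for such a $u_0$. This is precisely why I insist on choosing $u_0$ with support of cardinality at least two when constructing the mountain-pass direction.
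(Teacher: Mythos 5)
Your proof is correct and follows essentially the same route as the paper: part (i) is the identical HLS estimate $J_\lambda(u)\geq\frac{1}{2}\|u\|^2_{E_\lambda}-C\|u\|^{2p}_{E_\lambda}$ with $\rho$ small, and part (ii) is the same escape along a ray $t\mapsto tu_0$ with $2p>2$. Your extra care in choosing $u_0$ supported on at least two vertices (so that the nonlocal term is strictly positive) is exactly the point the paper glosses over with ``there exists $u$ such that $\int_G(R_\alpha\ast|u|^p)|u|^p\,d\mu\neq 0$'' and records separately in Remark \ref{f3}(i); making it explicit is a small but genuine improvement in rigor, not a different approach.
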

\begin{proof}
(i) By the HLS inequality (\ref{f1}), we get that
\begin{eqnarray*}
J_{\lambda}(u)&=&\frac{1}{2}\|u\|^{2}_{E_\lambda}-\frac{1}{2p}\int_{G}(R_{\alpha}\ast|u|^p)|u|^p\,d\mu\\&\geq&
\frac{1}{2}\|u\|^{2}_{E_\lambda}-C\|u\|^{2p}_{E_\lambda}.
\end{eqnarray*}
Since $p>1$, there exist $\theta>0$ and $\rho>0$ small enough such that $J_{\lambda}(u)\geq\theta>0$ for $\|u\|_{E_\lambda}=\rho$.

\
\

(ii)
First for each $\lambda>0,\,J_{\lambda}(0)=0$. Moreover, there exists $u\in E_\lambda\setminus\{0\}$ such that $\int_{G}(R_{\alpha}\ast|u|^p)|u|^p\,d\mu\neq0$. Hence, for this fixed $u$, as $t\rightarrow+\infty$, one gets that
\begin{eqnarray*}
J_{\lambda}(tu)=\frac{t^2}{2}\|u\|^{2}_{E_\lambda}-\frac{t^{2p}}{2p}\int_{G}(R_{\alpha}\ast|u|^p)|u|^p\,d\mu\rightarrow
-\infty.
\end{eqnarray*}
Therefore, there exists $t_0>0$ large enough such that $\|t_0 u\|_{E_\lambda}>\rho$ and $J_{\lambda}(t_0 u)<0$.

\end{proof}

\section{The existence of ground state solutions}
In this section, we prove the existence of ground state solutions to the equation (\ref{1}).
Recall that, for a given functional $J_\lambda\in C^{1}(E_\lambda,\R)$, a sequence $\{u_n\}\subset E_\lambda$ is a $(PS)_c$ sequence of the functional $J_\lambda$, if it satisfies, as $n\rightarrow+\infty$,
\begin{eqnarray*}
J_\lambda(u_n)\rightarrow c, \qquad \text{in}~ E_{\lambda},\qquad\text{and}\qquad
J'_\lambda(u_n)\rightarrow 0, \qquad \text{in}~ E^{*}_{\lambda}.
\end{eqnarray*}
where $E^{*}_{\lambda}$ is the dual space of $E_{\lambda}$. Moreover, if any $(PS)_c$ sequence has a convergent subsequence, then we say that $J_\lambda$ satisfies $(PS)_c$ condition.

We first prove a compactness result, which plays a key role in the proof of Theorem \ref{th1}.
\begin{prop}\label{lm13}
{\rm
Assume that $N\geq 2,\,0<\alpha<N,\,p>\frac{N+\alpha}{N}$ and ($A_1$)-($A_2$) are satisfied. For any $c^*>0$, there exists $\lambda^*>0$ such that $J_\lambda$ satisfies $(PS)_c$ condition for all $c\leq c^*$ and $\lambda\geq\lambda^*$.

}
\end{prop}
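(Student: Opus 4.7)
The plan is to follow the ``deepening potential well'' compactness strategy, using assumption ($A_2$) to gain $\ell^2$-smallness at infinity and then absorbing the nonlocal term via HLS together with a discrete interpolation. Fix any $c\le c^*$ and let $\{u_n\}\subset E_\lambda$ be a $(PS)_c$ sequence. The identity
\[
J_\lambda(u_n)-\frac{1}{2p}(J'_\lambda(u_n),u_n)=\Bigl(\frac{1}{2}-\frac{1}{2p}\Bigr)\|u_n\|^2_{E_\lambda}
\]
yields a uniform bound $\|u_n\|_{E_\lambda}\le K=K(c^*)$ that is independent of $\lambda$ and of the particular $c\le c^*$. Passing to a subsequence, $u_n\rightharpoonup u$ weakly in the Hilbert space $E_\lambda$; since each evaluation functional $\delta_x$ is bounded on $E_\lambda$ by Lemma \ref{lm0}, this gives $u_n(x)\to u(x)$ pointwise on $G$. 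Testing $J'_\lambda(u_n)\to 0$ against any $\varphi\in C_c(G)$ and passing to the limit termwise (using HLS and pointwise convergence of $|u_n|^p\to|u|^p$ to handle the Choquard term) yields $J'_\lambda(u)=0$, and in particular $(J'_\lambda(u),u)=0$.

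Set $v_n:=u_n-u$. Weak convergence in the Hilbert space $E_\lambda$ gives
\[
\|u_n\|^2_{E_\lambda}=\|v_n\|^2_{E_\lambda}+\|u\|^2_{E_\lambda}+o(1),
\]
while Lemma \ref{lm3} supplies the Br\'{e}zis--Lieb splitting
\[
\int_G(R_\alpha\ast|u_n|^p)|u_n|^p\,d\mu=\int_G(R_\alpha\ast|v_n|^p)|v_n|^p\,d\mu+\int_G(R_\alpha\ast|u|^p)|u|^p\,d\mu+o(1).
\]
Combining these with $(J'_\lambda(u_n),u_n)\to 0$ and $(J'_\lambda(u),u)=0$ produces the key relation
\[
\|v_n\|^2_{E_\lambda}-\int_G(R_\alpha\ast|v_n|^p)|v_n|^p\,d\mu\longrightarrow 0. \qquad(\star)
\]

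To conclude I show that for $\lambda$ large the nonlocal integral in ($\star$) is dominated by a small multiple of $\|v_n\|^2_{E_\lambda}$. Let $B=\{x\in G:a(x)\le M\}$, which is finite by ($A_2$). Since $a(x)>M$ on $G\setminus B$, one has $\int_{G\setminus B}v_n^2\,d\mu\le(\lambda M)^{-1}\|v_n\|^2_{E_\lambda}$; finiteness of $B$ together with $v_n\to 0$ pointwise gives $\int_B v_n^2\,d\mu\to 0$, so for $n$ large
\[
\|v_n\|^2_2\le 2(\lambda M)^{-1}\|v_n\|^2_{E_\lambda}.
\]
On the other hand, HLS (Lemma \ref{lm1}) with $r=s=\tfrac{2N}{N+\alpha}$, the discrete interpolation $\|v_n\|_q\le\|v_n\|_\infty^{1-2/q}\|v_n\|_2^{2/q}$ for $q:=\tfrac{2Np}{N+\alpha}>2$, and $\|v_n\|_\infty\le\|v_n\|_{E_\lambda}\le 2K$ combine to give
\[
\int_G(R_\alpha\ast|v_n|^p)|v_n|^p\,d\mu\le C(c^*)\|v_n\|_2^{2(N+\alpha)/N}.
\]
Substituting the tail estimate and using $\|v_n\|_{E_\lambda}^{2\alpha/N}\le(2K)^{2\alpha/N}$ to absorb the excess power, one deduces from ($\star$) that
\[
\|v_n\|^2_{E_\lambda}\le C'(c^*)(\lambda M)^{-(N+\alpha)/N}\|v_n\|^2_{E_\lambda}+o(1).
\]
Choosing $\lambda^*=\lambda^*(c^*)$ so that $C'(c^*)(\lambda^* M)^{-(N+\alpha)/N}\le\tfrac{1}{2}$, one obtains $\|v_n\|^2_{E_\lambda}\le 2o(1)\to 0$ for every $\lambda\ge\lambda^*$, i.e.\ $u_n\to u$ strongly in $E_\lambda$.

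The decisive mechanism is the interplay between ($A_2$), which converts the $E_\lambda$-bound into an $\ell^2$-bound of size $O(\lambda^{-1})$ outside a finite set, and the strictly super-quadratic HLS exponent $2(N+\alpha)/N>2$, which amplifies that smallness into a $\lambda^{-(N+\alpha)/N}$ coefficient that can be absorbed to the left of ($\star$). The main obstacle I anticipate is the uniform tracking of the constants $K(c^*)$ and $C(c^*)$ so that $\lambda^*$ depends only on $c^*$ and not on the particular $(PS)_c$ sequence; once this bookkeeping is in place the compactness follows. A secondary technical point is the passage to the limit in the nonlocal term in the first paragraph, which requires pairing the pointwise convergence of $|u_n|^p$ with the $\ell^{2N/(N-\alpha)}$-bound on $R_\alpha\ast|u_n|^p$ from Remark \ref{rm} against the compactly supported test function.
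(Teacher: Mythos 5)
Your argument is correct in substance, but it reaches the compactness by a genuinely different mechanism than the paper. Both proofs share the first half: boundedness of the $(PS)_c$ sequence, weak and pointwise convergence to a critical point $u$, and the Br\'ezis--Lieb splittings that show $v_n=u_n-u$ satisfies $\|v_n\|^2_{E_\lambda}-\int_G(R_\alpha\ast|v_n|^p)|v_n|^p\,d\mu\to 0$ (the paper packages this as: $\{v_n\}$ is a $(PS)_d$ sequence with $d=c-J_\lambda(u)$). From there the paper argues by contradiction using a dichotomy (Lemma \ref{lm9}(ii): either $d=0$ or $d\geq c_0$ with $c_0$ independent of $\lambda$) together with a tail estimate (Lemma \ref{lm12}) asserting that for $\lambda$ large the nonlocal energy outside a fixed ball $B^S_{r_\varepsilon}(e)$ is at most $\varepsilon<\frac{pc_0}{p-1}$; if $d\geq c_0$ the energy would have to concentrate on a finite ball, contradicting $v_n\to 0$ pointwise. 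You instead prove the strong convergence directly and quantitatively: the splitting of $G$ into the finite set $\{a\leq M\}$ and its complement yields $\|v_n\|_2^2\leq(\lambda M)^{-1}\|v_n\|^2_{E_\lambda}+o(1)$, and the HLS bound combined with the $\ell^\infty$--$\ell^2$ interpolation (valid precisely because $p>\frac{N+\alpha}{N}$ makes $\frac{2Np}{N+\alpha}>2$) converts this into $\int_G(R_\alpha\ast|v_n|^p)|v_n|^p\,d\mu\leq C'(c^*)(\lambda M)^{-(N+\alpha)/N}\|v_n\|^2_{E_\lambda}+o(1)$, which can be absorbed for $\lambda\geq\lambda^*(c^*)$. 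Your route avoids the dichotomy and the contradiction argument entirely and gives an explicit rate in $\lambda$; the paper's route isolates reusable lemmas (the $c_0$-gap and the tail estimate) at the cost of a less direct conclusion. One small repair: the step ``$\int_B v_n^2\,d\mu\to 0$, so for $n$ large $\|v_n\|_2^2\leq 2(\lambda M)^{-1}\|v_n\|^2_{E_\lambda}$'' is not justified as written, since $\|v_n\|^2_{E_\lambda}$ is not known a priori to be bounded below; either keep the term $\varepsilon_n=\int_B v_n^2\,d\mu$ explicitly as an additive $o(1)$ through the interpolation step (which changes nothing in the final absorption), or note that if $\|v_n\|_{E_\lambda}\to 0$ along a subsequence you are already done.
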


In order to prove the Proposition \ref{lm13}, we first prove some useful lemmas about the behavior of the $(PS)_c$ sequence of the functional $J_\lambda$.
\begin{lm}\label{lm9}
{\rm
Assume that $N\geq 2,\,0<\alpha<N$ and $p>\frac{N+\alpha}{N}$. Let $\{u_n\}\subset E_\lambda$ be a $(PS)_c$ sequence of the functional $J_\lambda$. Then
\begin{itemize}
 \item[(i)] $\{u_n\}$ is bounded in $E_\lambda$;\\
\item[(ii)] $\underset{n\rightarrow+\infty}{\lim}\|u_n\|^{2}_{E_\lambda}=\frac{2pc}{p-1},$
where either $c=0$ or $c\geq c_0$ for some $c_0>0$ not depending on $\lambda$.
\end{itemize}
}
\end{lm}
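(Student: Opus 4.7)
The plan is to exploit the standard Palais--Smale toolkit: combine $J_\lambda(u_n)=c+o(1)$ with $(J'_\lambda(u_n),u_n)=o(\|u_n\|_{E_\lambda})$ in the linear combination $J_\lambda(u_n)-\tfrac{1}{2p}(J'_\lambda(u_n),u_n)$ to get (i), then revisit the two relations to isolate both $\|u_n\|_{E_\lambda}^2$ and the nonlocal term in order to get (ii).

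For (i), I would compute
\[
J_\lambda(u_n)-\frac{1}{2p}(J'_\lambda(u_n),u_n)=\Bigl(\frac{1}{2}-\frac{1}{2p}\Bigr)\|u_n\|_{E_\lambda}^2,
\]
since the nonlocal terms cancel. The left-hand side is bounded by $|c|+1+\|J'_\lambda(u_n)\|_{E^*_\lambda}\|u_n\|_{E_\lambda}=|c|+1+o(1)\|u_n\|_{E_\lambda}$ for $n$ large. As $p>1$ gives $\tfrac{1}{2}-\tfrac{1}{2p}>0$, a standard inequality argument forces $\{\|u_n\|_{E_\lambda}\}$ to be bounded.

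For (ii), using boundedness from (i) together with $J'_\lambda(u_n)\to 0$ in $E^*_\lambda$, I get $(J'_\lambda(u_n),u_n)\to 0$, i.e.
\[
\|u_n\|_{E_\lambda}^2=\int_G(R_\alpha*|u_n|^p)|u_n|^p\,d\mu+o(1).
\]
Substituting into $J_\lambda(u_n)=c+o(1)$ gives $(\tfrac{1}{2}-\tfrac{1}{2p})\|u_n\|_{E_\lambda}^2=c+o(1)$, hence $\|u_n\|_{E_\lambda}^2\to\tfrac{2pc}{p-1}$.

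The one subtle point, and the only place where one has to be careful, is the dichotomy $c=0$ or $c\geq c_0$ with $c_0$ independent of $\lambda$. Assuming $c\neq 0$, the previous identities yield
\[
\int_G(R_\alpha*|u_n|^p)|u_n|^p\,d\mu\to\frac{2pc}{p-1}>0.
\]
Applying the HLS-based bound \eqref{f1} from Remark \ref{rm}, $\int_G(R_\alpha*|u_n|^p)|u_n|^p\,d\mu\leq C\|u_n\|_{E_\lambda}^{2p}$, with a constant $C$ that depends only on $N,\alpha,p$ (the embedding $\|u\|_q\leq\|u\|_{E_\lambda}$ from Lemma \ref{lm0} is $\lambda$-uniform because the $\lambda a(x)u^2$ term only inflates the norm), I obtain in the limit $\tfrac{2pc}{p-1}\leq C(\tfrac{2pc}{p-1})^p$. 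Using $p>1$ this rearranges to $\tfrac{2pc}{p-1}\geq C^{-1/(p-1)}$, i.e. $c\geq c_0:=\tfrac{p-1}{2p}C^{-1/(p-1)}>0$, and $c_0$ is $\lambda$-independent exactly because $C$ is. The main (mild) obstacle is keeping track of $\lambda$-independence of this threshold; everything else is a routine Palais--Smale computation.
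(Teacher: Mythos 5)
Your proof is correct and follows essentially the same route as the paper: the linear combination $J_\lambda(u_n)-\tfrac{1}{2p}(J'_\lambda(u_n),u_n)$ for boundedness and the limit identity, and the $\lambda$-uniform HLS bound (\ref{f1}) for the dichotomy. The only (cosmetic) difference is that you pass directly to the limit in $\|u_n\|^2_{E_\lambda}+o(1)\leq C\|u_n\|^{2p}_{E_\lambda}$ to extract $c\geq c_0$, whereas the paper argues contrapositively that $c<c_0$ forces $\|u_n\|_{E_\lambda}\leq\rho$ eventually and hence $\|u_n\|_{E_\lambda}\to 0$; both yield a threshold $c_0$ of the same form and same $\lambda$-independence.
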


\begin{proof}
Let $\{u_n\}$ be a $(PS)_c$ sequence of the functional $J_\lambda(u)$, that is,
\begin{equation}\label{1-9}
J_\lambda(u_n)\rightarrow c, \qquad \text{in}~ E_{\lambda};\qquad
J'_\lambda(u_n)\rightarrow 0, \qquad \text{in}~ E^{*}_{\lambda}.
\end{equation}

(i) It follows from (\ref{1-9}) that
\begin{eqnarray*}
(\frac{1}{2}-\frac{1}{2p})\|u_n\|^{2}_{E_\lambda}=J_\lambda(u_n)-\frac{1}{2p}(J'_\lambda(u_n),u_n)\leq C+\|J'_\lambda(u_n)\|_{ E^{*}_{\lambda}}\|u_n\|_{ E_{\lambda}},
\end{eqnarray*}
which implies that $\|u_n\|_{ E_{\lambda}}\leq C.$\\

(ii) Since $\{u_n\}$ is bounded in $E_\lambda$, we have that $\underset{n\rightarrow+\infty}{\lim}(J'_\lambda(u_n),u_n)=0$. Then
\begin{eqnarray*}
(\frac{1}{2}-\frac{1}{2p})\underset{n\rightarrow+\infty}{\lim}\|u_n\|^{2}_{E_\lambda}=\underset{n\rightarrow+\infty}{\lim}
(J_\lambda(u_n)-\frac{1}{2p}(J'_\lambda(u_n),u_n))=c.
\end{eqnarray*}
As a consequence,
\begin{equation}\label{cd}
\underset{n\rightarrow+\infty}{\lim}\|u_n\|^{2}_{E_\lambda}=\frac{2pc}{p-1}.
\end{equation}

For any $u\in E_\lambda$, by the HLS inequality (\ref{f1}), we have that
\begin{eqnarray*}\label{12}
(J'_\lambda(u),u)=\|u\|^{2}_{E_\lambda}-\int_{G}(R_{\alpha}\ast|u|^p)|u|^p\,d\mu\geq\|u\|^{2}_{E_\lambda}-C\|u\|^{2p}_{E_\lambda}.
\end{eqnarray*}
Set $\rho=(\frac{1}{2C})^{\frac{1}{2(p-1)}}$. If $\|u\|_{E_\lambda}\leq\rho$, then one gets that
\begin{equation}\label{1-3}
(J'_\lambda(u),u)\geq\frac{1}{2}\|u\|^{2}_{E_\lambda}.
\end{equation}

Let $c_0=\frac{(p-1)\rho^2}{2p}$. We claim that if $c<c_0$, then $c=0$. In fact, by (\ref{cd}), one has that
\begin{eqnarray*}
\underset{n\rightarrow+\infty}{\lim}\|u_n\|^{2}_{E_\lambda}=\frac{2pc}{p-1}<\rho^2.
\end{eqnarray*}
Hence $\|u_n\|_{E_\lambda}\leq\rho$ for large $n$. Then it follows from (\ref{1-3}) that
\begin{eqnarray*}
\frac{1}{2}\|u_n\|^{2}_{E_\lambda}\leq (J'_\lambda(u_n),u_n)\leq\|J'_\lambda(u_n)\|_{ E^{*}_{\lambda}}\|u_n\|_{E_\lambda},
\end{eqnarray*}
which implies that $\|u_n\|_{E_\lambda}\rightarrow 0$ as $n\rightarrow+\infty$. We complete the claim.

\end{proof}

\begin{lm}\label{lm10}
{\rm
Assume that $N\geq 2,\,0<\alpha<N$ and $p>\frac{N+\alpha}{N}$. Let $\{u_n\}\subset E_\lambda$ be a $(PS)_c$ sequence of the functional $J_\lambda$. Passing to a subsequence if necessary, there exists some $u\in E_\lambda$ such that
\begin{itemize}
\item[(i)] $u_n\rightharpoonup u,\qquad \text{in}~E_{\lambda}$;\\
\item[(ii)] $u_n\rightarrow u,\qquad \text{pointwise~in}~G$;\\
\item[(iii)] $J'_\lambda(u)=0,\qquad \text{in}~E^{*}_{\lambda}$.
\end{itemize}
}
\end{lm}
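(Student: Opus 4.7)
The plan is to prove (i), (ii), (iii) in sequence, with most effort directed at the limit passage for (iii). The main tools are the Hilbert-space structure of $E_\lambda$, the embedding $E_\lambda \hookrightarrow \ell^q(G)$ from Lemma~\ref{lm0}, and the discrete HLS machinery of Lemma~\ref{lm1}.

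For (i) and (ii), Lemma~\ref{lm9}(i) gives boundedness of $\{u_n\}$ in the reflexive Hilbert space $E_\lambda$, so I would pass to a subsequence converging weakly to some $u\in E_\lambda$. This transfers to weak convergence in $\ell^2(G)$ via Lemma~\ref{lm0}, and then testing against each delta function $\delta_{x_0}\in\ell^2(G)$ (which is a continuous linear functional, since $u(x_0)=\langle u,\delta_{x_0}\rangle_{\ell^2}$) produces the pointwise convergence $u_n(x_0)\to u(x_0)$ for every $x_0\in G$.

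For (iii), I would show $(J'_\lambda(u),\phi)=0$ first for $\phi \in C_c(G)$, and then extend to arbitrary $\phi \in E_\lambda$ by density of $C_c(G)$ in $E_\lambda$ (approximation via $u \mapsto u\chi_{B_r(e)}$, using that each of the four summands in $\|u\|_{E_\lambda}^2$ is summable, so dominated convergence kills the tail). Since $(J'_\lambda(u_n),\phi)\to 0$, the task reduces to showing $(J'_\lambda(u_n),\phi)\to(J'_\lambda(u),\phi)$ for $\phi\in C_c(G)$. The three quadratic integrals $\int\Delta u_n\Delta\phi$, $\int\nabla u_n\nabla\phi$, and $\int(1+\lambda a)u_n\phi$ all converge at once by weak convergence of $u_n$ in $E_\lambda$, since each defines a continuous linear form on $E_\lambda$ when $\phi\in E_\lambda$.

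The main obstacle is the nonlinear term $\int_G (R_\alpha\ast|u_n|^p)|u_n|^{p-2}u_n\phi\,d\mu$. Since $\phi$ has finite support, this integral is a finite sum indexed by $\text{supp}(\phi)$, so by the triangle inequality it suffices to check convergence of each summand at $x\in\text{supp}(\phi)$. The factor $|u_n(x)|^{p-2}u_n(x)\phi(x)\to|u(x)|^{p-2}u(x)\phi(x)$ by (ii), so the crux is the pointwise convergence $(R_\alpha\ast|u_n|^p)(x)\to(R_\alpha\ast|u|^p)(x)$ of the convolution at each fixed $x$. I would argue this in three moves: first, Lemma~\ref{lm0} together with $p>(N+\alpha)/N$ gives boundedness of $\{u_n\}$ in $\ell^{2Np/(N+\alpha)}(G)$, so $\{|u_n|^p\}$ is bounded in $\ell^{2N/(N+\alpha)}(G)$; second, boundedness together with pointwise convergence $|u_n|^p\to|u|^p$ yields $|u_n|^p \rightharpoonup |u|^p$ weakly in $\ell^{2N/(N+\alpha)}(G)$ (a standard fact for $\ell^q$ with $1<q<\infty$); third, the fixed function $y\mapsto R_\alpha(x,y)$, which behaves like $d^S(x,y)^{\alpha-N}$, lies in the dual space $\ell^{2N/(N-\alpha)}(G)$, since by the polynomial volume growth $\beta^S(r)\simeq r^N$ the tail sum $\sum_y d^S(x,y)^{-2N}$ is comparable to $\sum_{r\geq 1} r^{-N-1}<\infty$. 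The weak--strong pairing then delivers the needed convergence at each $x$, and summing over the finite set $\text{supp}(\phi)$ concludes the argument.
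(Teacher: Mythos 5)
Your proof is correct, and while its skeleton (weak compactness, pointwise convergence, testing against $C_c(G)$) matches the paper's, the treatment of the nonlinear term --- the only delicate step --- takes a genuinely different route. The paper fixes $v\in C_c(G)$ and splits the difference of the nonlocal terms into $J_1+J_2$: for $J_1$ it uses that $\{R_\alpha\ast|u_n|^p\}$ is bounded in $\ell^{\frac{2N}{N-\alpha}}(G)$ by the HLS inequality (\ref{f2}) and converges weakly there to $R_\alpha\ast|u|^p$, paired against the fixed element $|u|^{p-2}uv\in\ell^{\frac{2N}{N+\alpha}}(G)$; for $J_2$ it combines the HLS inequality (\ref{bo}) with strong convergence of $|u_n|^{p-2}u_n$ on the finite ball supporting $v$. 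You instead reduce everything to pointwise convergence of the convolution $(R_\alpha\ast|u_n|^p)(x)$ at the finitely many points of $\text{supp}(\phi)$, obtained by pairing the weak convergence $|u_n|^p\rightharpoonup|u|^p$ in $\ell^{\frac{2N}{N+\alpha}}(G)$ against the fixed kernel $R_\alpha(x,\cdot)\in\ell^{\frac{2N}{N-\alpha}}(G)$, whose membership you check directly from the volume growth $\beta^S(r)\leq Cr^N$ rather than through the HLS machinery. Your route is more elementary and in fact supplies the justification for a point the paper glosses over: the identification of the weak limit of $R_\alpha\ast|u_n|^p$ with $R_\alpha\ast|u|^p$ requires exactly the pointwise convergence you prove. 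Two further minor divergences, both to your credit: you get (ii) by testing weak $\ell^2$-convergence against delta functions (which automatically identifies the pointwise limit with the weak limit $u$, whereas the paper's diagonal argument leaves that identification implicit), and you explicitly justify the reduction of (iii) to test functions in $C_c(G)$ by a truncation/density argument in $E_\lambda$, a step the paper asserts without proof.
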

\begin{proof}
(i)
By Lemma \ref{lm9}, $\{u_n\}$ is bounded in $E_\lambda$.
Then passing to a subsequence if necessary, there exists some $u\in E_\lambda$ such that $u_n\rightharpoonup u$ as $n\rightarrow+\infty$.

\
\

(ii) Clearly, $\{u_n\}\subset E_{\lambda}$ is bounded in $\ell^2(G)$, and hence bounded in $\ell^{\infty}(G)$. Therefore, by diagonal principle, there exists a subsequence of $\{u_n\}$  pointwise converging to $u$.

\
\

(iii) It is sufficient to show that for any $v\in C_{c}(G)$, $(J'_\lambda(u),v)=0.$
For any $v\in C_{c}(G)$, assume that $\text{supp}(v)\subseteq B^{S}_{r}(e)$, where $r$ is a positive constant. Since $B^{S}_{r+1}(e)\subset G$ is a finite set and $u_n\rightarrow u $ pointwise in $G$ as $n\rightarrow+\infty$, for any $s\geq 1$,
\begin{equation}\label{84}
u_n\rightarrow u,\qquad \text{in}~\ell^{s}(B^{S}_{r+1}(e)).
\end{equation}
We claim that $|(J'_\lambda(u_n)-J'_\lambda(u),v)|\rightarrow0$ as $n\rightarrow+\infty$. In fact, direct calculation yields that
\begin{eqnarray*}
|(J'_\lambda(u_n)-J'_\lambda(u),v)|&\leq&
\int_{B^S_{r+1}(e)}(|\Delta(u_n-u)||\Delta v|+|\nabla(u_n-u)||\nabla v|)\,d\mu\\&&+\int_{B^S_{r}(e)}(1+\lambda a)|u_n-u||v| \,d\mu\\&&+|\int_{B^S_{r}(e)}[(R_\alpha\ast|u_n|^p)|u_n|^{p-2}u_n-(R_\alpha\ast|u|^p)|u|^{p-2}u]v \,d\mu|
\\&=:& I_1+I_2+I_3.
\end{eqnarray*}

For $I_1$ and $I_2$, by (\ref{84}), one gets easily that $I_{1}\rightarrow0,\,I_2\rightarrow0$ as $n\rightarrow+\infty.$ For $I_3$,
\begin{eqnarray*}
I_3 &=& |\int_{B^S_{r}(e)}[(R_\alpha\ast|u_n|^p)|u_n|^{p-2}u_n-(R_\alpha\ast|u|^p)|u|^{p-2}u]v\,d\mu|\\&\leq&|\int_{B^S_{r}(e)}(R_\alpha\ast|u_n|^p)|u|^{p-2}uv \,d\mu-\int_{B^S_{r}(e)}(R_\alpha\ast|u|^p)|u|^{p-2}uv\,d\mu|\\&&+
|\int_{B^S_{r}(e)}(R_\alpha\ast|u_n|^p)(|u_n|^{p-2}u_n-|u|^{p-2}u)v \,d\mu|
\\&=:& J_1+J_2.
\end{eqnarray*}
We first prove that $J_1\rightarrow 0$ as $n\rightarrow+\infty$. By the boundedness of $\{u_n\}\subset E_\lambda$ in $\ell^{\frac{2Np}{N+\alpha}}(G)$ and the HLS inequality (\ref{f2}), we get that $\{R_\alpha\ast|u_n|^p\}$ is bounded in $\ell^{\frac{2N}{N-\alpha}}(G)$. Thus, passing to a subsequence if necessary, we have that
$R_\alpha\ast|u_n|^p\rightharpoonup R_\alpha\ast|u|^p$ in $\ell^{\frac{2N}{N-\alpha}}(G)$. Note that $|u|^{p-2}uv\in \ell^{\frac{2N}{N+\alpha}}(G)$, then $J_1\rightarrow0$ as $n\rightarrow+\infty$.

Next, we prove that $J_2\rightarrow 0$ as $n\rightarrow+\infty$. By the HLS inequality (\ref{bo}), the boundedness of $\{u_n\}\subset E_{\lambda}$ in $\ell^{\frac{2Np}{N+\alpha}}(G)$ and (\ref{84}), one gets that
\begin{eqnarray*}
J_2&=&|\int_{B^S_{r}(e)}(R_\alpha\ast|u_n|^p)(|u_n|^{p-2}u_n-|u|^{p-2}u)v \,d\mu|\\&\leq&
C\|u_n\|^p_\frac{2Np}{N+\alpha}(\int_{B^S_{r}(e)}||u_n|^{p-2}u_n-|u|^{p-2}u|
^{\frac{2N}{N+\alpha}}|v|^{\frac{2N}{N+\alpha}}\,d\mu)^{\frac{N+\alpha}{2N}}\\&\leq& C(\int_{B^S_{r}(e)}||u_n|^{p-2}u_n-|u|^{p-2}u|
^{\frac{2Np}{(N+\alpha)(p-1)}}\,d\mu)^{\frac{(N+\alpha)(p-1)}{2Np}}(\int_{B_r}|v|^{\frac{2Np}{N+\alpha}}\,d\mu)^{\frac{N+\alpha}{2Np}}\\&\leq&
C\||u_n|^{p-2}u_n-|u|^{p-2}u\|_{\ell^{\frac{2Np}{(N+\alpha)(p-1)}}(B^S_{r}(e))}\\&\rightarrow&0, \qquad n\rightarrow+\infty.
\end{eqnarray*}
Therefore, we have that $I_3\rightarrow0$ as $n\rightarrow+\infty$. In a summary, up to a subsequence, we prove that $(J'_\lambda(u_n)-J'_\lambda(u),v)\rightarrow0$ as $n\rightarrow+\infty$. Then $(J'_\lambda(u),v)=0$ follows from $(J'_\lambda(u_n),v)\rightarrow0$ as $n\rightarrow+\infty$.

\end{proof}

\begin{lm}\label{lm11}
{\rm
Assume that $N\geq 2,\,0<\alpha<N$ and $p>\frac{N+\alpha}{N}$. Let $\{u_n\}\subset E_\lambda$ be a $(PS)_c$ sequence of the functional $J_\lambda$. Passing to a subsequence if necessary, there exists some $u\in E_\lambda$ such that
\begin{itemize}
\item[(i)] $\underset{n\rightarrow+\infty}{\lim}J_\lambda(u_n-u)=c-J_\lambda(u)$;\\
\item[(ii)] $\underset{n\rightarrow+\infty}{\lim}J'_\lambda(u_n-u)=0,\qquad \text{in~}E^{*}_\lambda$.
\end{itemize}
}
\end{lm}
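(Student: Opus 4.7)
Proof plan for Lemma 4.5:

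For part (i), the plan is to decompose $J_\lambda$ into its quadratic and nonlocal pieces and apply a Brézis–Lieb type splitting to each. Since $E_\lambda$ is a Hilbert space and $u_n\rightharpoonup u$ in $E_\lambda$ by Lemma 4.3(i), an immediate consequence is the Hilbert-space identity $\|u_n-u\|_{E_\lambda}^2 = \|u_n\|_{E_\lambda}^2 - \|u\|_{E_\lambda}^2 + o(1)$. By Lemma 4.2(i) the sequence $\{u_n\}$ is bounded in $E_\lambda$, hence (by Lemma 2.1) in $\ell^{\frac{2Np}{N+\alpha}}(G)$. Together with pointwise convergence from Lemma 4.3(ii), Lemma 2.5 gives
\begin{equation*}
\int_G(R_\alpha\ast|u_n|^p)|u_n|^p\,d\mu - \int_G(R_\alpha\ast|u_n-u|^p)|u_n-u|^p\,d\mu \longrightarrow \int_G(R_\alpha\ast|u|^p)|u|^p\,d\mu.
\end{equation*}
Subtracting $\tfrac{1}{2}$ of the first decomposition and $\tfrac{1}{2p}$ of the second yields $J_\lambda(u_n)-J_\lambda(u_n-u)\to J_\lambda(u)$, and the hypothesis $J_\lambda(u_n)\to c$ gives the claim.

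For part (ii), denote by $Q_\lambda(\cdot,\cdot)$ the inner product of $E_\lambda$. By linearity of the quadratic part and by using $J'_\lambda(u)=0$ (Lemma 4.3(iii)), I would rewrite, for every $v\in E_\lambda$,
\begin{equation*}
(J'_\lambda(u_n-u),v) = (J'_\lambda(u_n),v) - (J'_\lambda(u),v) + R_n(v),
\end{equation*}
where
\begin{equation*}
R_n(v) = \int_G\!\!\Bigl[(R_\alpha\ast|u_n|^p)|u_n|^{p-2}u_n - (R_\alpha\ast|u|^p)|u|^{p-2}u - (R_\alpha\ast|u_n-u|^p)|u_n-u|^{p-2}(u_n-u)\Bigr]v\,d\mu.
\end{equation*}
Since $\|J'_\lambda(u_n)\|_{E_\lambda^*}\to 0$ and $J'_\lambda(u)=0$, the first two terms are $o(\|v\|_{E_\lambda})$, uniformly in $v$. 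So it remains to show $\sup_{\|v\|_{E_\lambda}\le 1}|R_n(v)|\to 0$; this is the main obstacle.

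To bound $R_n(v)$, I would split the bracket as
\begin{equation*}
(R_\alpha\ast|u_n|^p)\bigl(|u_n|^{p-2}u_n-|u_n-u|^{p-2}(u_n-u)\bigr) + \bigl(R_\alpha\ast(|u_n|^p-|u_n-u|^p)\bigr)|u_n-u|^{p-2}(u_n-u) - (R_\alpha\ast|u|^p)|u|^{p-2}u,
\end{equation*}
and apply the Brézis–Lieb variant of Remark 2.4(1) twice: with exponent $\tfrac{2Np}{N+\alpha}$ and $q=p$ to get $|u_n|^p-|u_n-u|^p\to|u|^p$ in $\ell^{2N/(N+\alpha)}(G)$ (cf.\ (2.9)), and with $q=p-1$ (using the signed variant $|\cdot|^{p-2}(\cdot)$) to get $|u_n|^{p-2}u_n-|u_n-u|^{p-2}(u_n-u)\to|u|^{p-2}u$ in $\ell^{2Np/((N+\alpha)(p-1))}(G)$. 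Then HLS (Lemma 2.2, inequality (2.5)) converts the first of these to strong convergence $R_\alpha\ast(|u_n|^p-|u_n-u|^p)\to R_\alpha\ast|u|^p$ in $\ell^{2N/(N-\alpha)}(G)$, while the weak convergence $|u_n-u|^{p-2}(u_n-u)\rightharpoonup 0$ in $\ell^{2Np/((N+\alpha)(p-1))}$ kills the paired term. Combining these convergences with Hölder's inequality across the HLS exponents $\tfrac{1}{r}+\tfrac{1}{s}+\tfrac{N-\alpha}{N}=2$ and the continuous embedding $E_\lambda\hookrightarrow\ell^{2Np/(N+\alpha)}(G)$ from Lemma 2.1 produces $|R_n(v)|\le\varepsilon_n\|v\|_{E_\lambda}$ with $\varepsilon_n\to 0$, proving $J'_\lambda(u_n-u)\to 0$ in $E_\lambda^*$. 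The delicate step is ensuring the three-way decomposition of the nonlocal term recombines uniformly in $v$; this is where the Brézis–Lieb variant and HLS interact in a non-obvious way.
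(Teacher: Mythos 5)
Your part (i) matches the paper's proof in substance: both split $J_\lambda$ into the quadratic part and the nonlocal part, handle the latter with Lemma \ref{lm3}, and handle the former with a Br\'ezis--Lieb splitting (the paper applies the Br\'ezis--Lieb lemma separately to $\int(1+\lambda a)u_n^2$, $\int|\nabla u_n|^2$ and $\int|\Delta u_n|^2$; your Hilbert-space identity $\|u_n-u\|_{E_\lambda}^2=\|u_n\|_{E_\lambda}^2-\|u\|_{E_\lambda}^2+o(1)$ from weak convergence is an equivalent and slightly cleaner way to get the same thing). For part (ii), however, you take a genuinely different and considerably heavier route. The paper never writes $J'_\lambda(u_n-u)=J'_\lambda(u_n)-J'_\lambda(u)+R_n$ and never needs a Br\'ezis--Lieb lemma for the \emph{derivative} of the Choquard term. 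Instead it tests $J'_\lambda(u_n-u)$ directly against $v\in C_c(G)$ with $\mathrm{supp}(v)\subseteq B^S_r(e)$: the local terms are controlled by $\|u_n-u\|_{\ell^2(B^S_{r+1}(e))}$, and the nonlocal term is estimated via HLS and H\"older by $C\|u_n-u\|^{p-1}_{\ell^2(B^S_r(e))}\|v\|_{E_\lambda}$, all of which vanish because $u_n\to u$ pointwise and balls are finite sets. This exploitation of discreteness is what lets the paper avoid the three-term recombination entirely; your route is the standard one in the continuous setting and proves a strictly stronger splitting statement than the paper needs.

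Within your part (ii) there is one step that, as written, does not deliver what you need. You must show $\sup_{\|v\|_{E_\lambda}\le 1}|R_n(v)|\to 0$, i.e.\ convergence of $R_n$ in the \emph{dual norm}, which amounts to showing that the bracket converges to $0$ in $\ell^{t}(G)$ with $t=\frac{2Np}{2Np-N-\alpha}$ (the conjugate of $\frac{2Np}{N+\alpha}$). But for the paired term $\bigl(R_\alpha\ast(|u_n|^p-|u_n-u|^p)\bigr)|u_n-u|^{p-2}(u_n-u)$ you invoke only the \emph{weak} convergence $|u_n-u|^{p-2}(u_n-u)\rightharpoonup 0$; weak convergence against the strongly convergent factor gives $R_n(v)\to 0$ for each fixed $v$, not uniformly over the unit ball. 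To close this you need a norm estimate: split off $\bigl(R_\alpha\ast(|u_n|^p-|u_n-u|^p)-R_\alpha\ast|u|^p\bigr)|u_n-u|^{p-2}(u_n-u)$, which goes to $0$ in $\ell^t$ by the strong HLS convergence times a bounded factor, and then show $\|(R_\alpha\ast|u|^p)\,|u_n-u|^{p-2}(u_n-u)\|_{\ell^t}\to 0$ by cutting the fixed function $R_\alpha\ast|u|^p\in\ell^{\frac{2N}{N-\alpha}}$ into a small tail outside a large ball plus a finite set on which pointwise convergence applies. Two further small points: the signed Br\'ezis--Lieb variant with exponent $q=p-1$ is outside the stated range $q\in[1,p]$ of Remark \ref{lm4} when $p<2$ (still true, but via the H\"older-continuity bound $\bigl||a|^{p-2}a-|b|^{p-2}b\bigr|\le C|a-b|^{p-1}$ and dominated convergence rather than the cited remark); and note that your identity uses $J'_\lambda(u)=0$ from Lemma \ref{lm10}(iii), which the paper's argument does not need.
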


\begin{proof}
It follows from Lemma \ref{lm10} that
\begin{eqnarray*}
\begin{array}{ll}
\|u_n\|_{E_\lambda}\leq C\qquad \text{and}\qquad
u_n\rightarrow u, \qquad \text{pointwise~in}~G.
\end{array}
\end{eqnarray*}

(i) By the Br\'{e}zis-Lieb lemma, we have that
\begin{equation}\label{bb}
\int_{G}(1+\lambda a)|u_n|^2\,d\mu-\int_{G}(1+\lambda a)|u_n-u|^2\,d\mu=\int_{G}(1+\lambda a)|u|^2 \,d\mu+o(1).
\end{equation}

Similar to the proof of Lemma 3.3 in \cite{HLW}, one has that
\begin{equation}\label{bc}
\int_{G}|\nabla u_n|^2\,d\mu-\int_{G}|\nabla (u_n-u)|^2 \,d\mu=\int_{G}|\nabla u|^2 \,d\mu+o(1).
\end{equation}

In addition, since $\{\Delta u_n\}$ is bounded in $\ell^2(G)$ and $\Delta u_n\rightarrow\Delta u$ pointwise in $G$ as $n\rightarrow+\infty$, by the Br\'{e}zis-Lieb lemma, we obtain that
\begin{equation}\label{bd}
\int_{G}|\Delta u_n|^2\,d\mu-\int_{G}|\Delta (u_n-u)|^2 \,d\mu=\int_{G}|\Delta u|^2\,d\mu+o(1).
\end{equation}

Then by (\ref{bb}), (\ref{bc}), (\ref{bd}) and Lemma \ref{lm3}, one gets that
\begin{eqnarray*}
J_\lambda(u_n)-J_\lambda(u_n-u)=J_\lambda(u)+o(1).
\end{eqnarray*}
Note that $\underset{n\rightarrow+\infty}{\lim}J_\lambda(u_n)=c$, we obtain the desired result
\begin{eqnarray*}
J_\lambda(u_n-u)=c-J_\lambda(u)+o(1).
\end{eqnarray*}

(ii)
For any $v\in C_{c}(G)$, assume that $\text{supp}(v)\subseteq B^{S}_{r}(e)$, where $r$ is a positive constant. By the HLS inequality (\ref{bo}), the boundedness of $\{u_n\}$ in $E_\lambda$ and the H\"{o}lder inequality, one has that
\begin{eqnarray*}
\int_{B^{S}_{r}(e)}(R_\alpha\ast|u_n-u|^p)|u_n-u|^{p-1}|v| \,d\mu&\leq&
C\|u_n-u\|^{p}_{\frac{2Np}{N+\alpha}}(\int_{B^{S}_{r}(e)}|u_n-u|
^{\frac{2N(p-1)}{N+\alpha}}|v|^{\frac{2N}{N+\alpha}}\,d\mu)^{\frac{N+\alpha}{2N}}\\&\leq& C\|u_n-u\|^{p}_{ E_\lambda}(\int_{B^{S}_{r}(e)}|u_n-u|
^{\frac{2N(p-1)}{N+\alpha}}|v|^{\frac{2N}{N+\alpha}}\,d\mu)^{\frac{N+\alpha}{2N}}\\&\leq& C\|u_n-u\|^{(p-1)}_{\ell^{\frac{2Np}{N+\alpha}}(B^{S}_{r}(e))}\|v\|_{\frac{2Np}{N+\alpha}}\\&\leq&
C\|u_n-u\|^{(p-1)}_{\ell^{2}(B^{S}_{r}(e))}\|v\|_{E_\lambda}.
\end{eqnarray*}
Since $B^S_{r+1}(e)$ is a finite set and $u_n\rightarrow u $ pointwise in $G$ as $n\rightarrow+\infty$, we get that
\begin{eqnarray*}
|(J'(u_n-u),v)|&\leq& \int_{B^S_{r+1}(e)}(|\Delta(u_n-u)||\Delta v|+|\nabla(u_n-u)||\nabla v|)\,d\mu+\int_{B^S_{r}(e)}(1+\lambda a)|u_n-u||v| \,d\mu\\&&+\int_{B^{S}_{r}(e)}(R_\alpha\ast|u_n-u|^p)|u_n-u|^{p-1}|v| \,d\mu\\&\leq&
\|\Delta(u_n-u)\|_{\ell^{2}(B^S_{r+1}(e))}\|\Delta v\|_{2}+\|\nabla(u_n-u)\|_{\ell^{2}(B^S_{r+1}(e))}\|\nabla v\|_{2}\\&&+\|(1+\lambda a)^{\frac{1}{2}}(u_n-u)\|_{\ell^{2}(B^S_{r}(e))}
\|(1+\lambda a)^{\frac{1}{2}}v\|_{2}+C\|u_n-u\|^{(p-1)}_{\ell^{2}(B^S_{r}(e))}\|v\|_{E_\lambda}\\&\leq&C\xi_{n}\|v\|_{E_\lambda},
\end{eqnarray*}
where $C$ is a constant not depending on $n$ and $\xi_{n}\rightarrow 0$ as $n\rightarrow+\infty$.
Therefore, we get that
$$\underset{n\rightarrow+\infty}{\lim}\|J'(u_n-u)\|_{E^{*}_\lambda}=\underset{n\rightarrow+\infty}{\lim}
\underset{\|v\|_{E_\lambda}=1}{\sup}|(J'(u_n-u),v)|=0.$$

\end{proof}

\begin{lm}\label{lm12}
{\rm
Assume that $N\geq 2,\,0<\alpha<N,\,p>\frac{N+\alpha}{N}$ and $(A_1)-(A_2)$ are satisfied. Let $c_*$ be a fixed constant. For any $\varepsilon>0$, there exist $\lambda_{\varepsilon}>0$ and $r_{\varepsilon}>0$ such that if $\{u_n\}\subset E_\lambda$ is a $(PS)_c$ sequence of the functional $J_\lambda$ with $c\leq c_*$ and $\lambda\geq\lambda_{\varepsilon}$, then we have that
\begin{eqnarray*}
\underset{n\rightarrow+\infty}{\limsup}\int_{G\setminus B^{S}_{r_{\varepsilon}}(e)}(R_{\alpha}\ast|u_n|^p)|u_n|^p \,d\mu\leq\varepsilon.
\end{eqnarray*}
}
\end{lm}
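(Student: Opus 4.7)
The plan is to exploit assumption $(A_2)$ to localize the mass of $u_n$ outside a fixed large ball, and then feed this smallness into the HLS inequality to control the Choquard term.

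First, I would use Lemma \ref{lm9}(i) to fix a constant $K>0$, depending only on $c_*$ (and not on $\lambda$), such that $\|u_n\|_{E_\lambda}\leq K$ for every $(PS)_c$ sequence with $c\leq c_*$ and every $n$ large enough. Then, by the assumption $(A_2)$, the set $A_M=\{x\in G: a(x)\leq M\}$ is finite, so I can choose $r_\varepsilon>0$ so large that $A_M\subset B^S_{r_\varepsilon}(e)$. Consequently $a(x)>M$ for every $x\in G\setminus B^S_{r_\varepsilon}(e)$, and the definition of $\|\cdot\|_{E_\lambda}$ yields the crucial exterior $\ell^2$ decay
\begin{equation*}
\|u_n\|_{\ell^2(G\setminus B^S_{r_\varepsilon}(e))}^2\leq \frac{1}{\lambda M}\int_{G\setminus B^S_{r_\varepsilon}(e)}\lambda a(x)u_n^2\,d\mu\leq \frac{\|u_n\|_{E_\lambda}^2}{\lambda M}\leq \frac{K^2}{\lambda M}.
\end{equation*}

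Next, I would apply the discrete HLS inequality of Lemma \ref{lm1} with $r=s=\frac{2N}{N+\alpha}$ to the pair $f=|u_n|^p$ and $g=\chi_{G\setminus B^S_{r_\varepsilon}(e)}|u_n|^p$, obtaining
\begin{equation*}
\int_{G\setminus B^S_{r_\varepsilon}(e)}(R_\alpha\ast|u_n|^p)|u_n|^p\,d\mu\leq C\|u_n\|_{\frac{2Np}{N+\alpha}}^p\,\|u_n\|_{\ell^{\frac{2Np}{N+\alpha}}(G\setminus B^S_{r_\varepsilon}(e))}^p.
\end{equation*}
Since $p>\frac{N+\alpha}{N}$ implies $\frac{2Np}{N+\alpha}>2$, the discrete embedding $\ell^2\hookrightarrow\ell^{\frac{2Np}{N+\alpha}}$ (cf.\ Lemma \ref{lm0}) gives $\|u_n\|_{\frac{2Np}{N+\alpha}}\leq\|u_n\|_2\leq\|u_n\|_{E_\lambda}\leq K$ on the full group, and likewise
\begin{equation*}
\|u_n\|_{\ell^{\frac{2Np}{N+\alpha}}(G\setminus B^S_{r_\varepsilon}(e))}\leq \|u_n\|_{\ell^2(G\setminus B^S_{r_\varepsilon}(e))}\leq \Bigl(\frac{K^2}{\lambda M}\Bigr)^{1/2}.
\end{equation*}
Combining these bounds yields
\begin{equation*}
\int_{G\setminus B^S_{r_\varepsilon}(e)}(R_\alpha\ast|u_n|^p)|u_n|^p\,d\mu\leq C K^{2p}(\lambda M)^{-p/2},
\end{equation*}
for every $n$ large enough. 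Choosing $\lambda_\varepsilon$ so that $C K^{2p}(\lambda_\varepsilon M)^{-p/2}\leq\varepsilon$ finishes the argument, since both $r_\varepsilon$ (determined by $A_M$) and $\lambda_\varepsilon$ (determined by $\varepsilon$, $K$, $M$, $p$) are independent of the particular $(PS)_c$ sequence satisfying $c\leq c_*$.

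The step that requires some care is the first one: I must ensure that the $E_\lambda$-bound on $\{u_n\}$ from Lemma \ref{lm9}(i) can be chosen uniformly in $\lambda$ and in the sequence, given only the ceiling $c\leq c_*$. This is indeed the case because the proof of Lemma \ref{lm9}(i) uses only $J_\lambda(u_n)\to c\leq c_*$ and $\|J'_\lambda(u_n)\|_{E^*_\lambda}\to 0$, so the resulting bound $K$ depends on $c_*$ and $p$ alone. Beyond this, the argument is essentially a decay-at-infinity estimate: the coercivity gained from the deep well $\lambda a(x)$ outside a finite set is converted, through HLS and the discrete inclusions between $\ell^p$ spaces, into a quantitative control on the exterior contribution of the nonlocal nonlinearity.
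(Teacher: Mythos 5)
Your proof is correct and rests on the same two pillars as the paper's argument: the exterior $\ell^2$-smallness extracted from the deep potential well via $(A_2)$, and the HLS inequality combined with the discrete embedding $\ell^2\hookrightarrow\ell^{\frac{2Np}{N+\alpha}}$ to convert that smallness into control of the nonlocal term. The one genuine difference is in the treatment of the low-potential region: the paper splits $G\setminus B^{S}_{r}(e)$ into $\Omega^{+}_{r}=\{d^S(x,e)>r,\ a(x)\geq M\}$ and $\Omega^{-}_{r}=\{d^S(x,e)>r,\ a(x)<M\}$ and kills the second piece by a H\"older estimate using $\mu(\Omega^{-}_{r})\rightarrow 0$ as $r\rightarrow+\infty$, whereas you observe that, since $\{x: a(x)\leq M\}$ is literally a finite set of vertices, one can take $r_{\varepsilon}$ large enough that this set is contained in $B^{S}_{r_{\varepsilon}}(e)$, which makes the low-potential exterior region empty and lets you work with a single region where $a>M$. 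This is a legitimate simplification in the discrete setting (it would not be available in the continuous Bartsch--Wang framework, where $\{a\leq M\}$ has finite measure but need not be bounded), and it buys a cleaner, fully quantitative bound $CK^{2p}(\lambda M)^{-p/2}$ with $r_{\varepsilon}$ independent of $\varepsilon$ and an explicit choice of $\lambda_{\varepsilon}$. Your care about the uniformity of the $E_\lambda$-bound $K$ for $c\leq c_*$ is exactly the point the paper also relies on, via $\lim_{n}\|u_n\|^{2}_{E_\lambda}=\frac{2pc}{p-1}\leq\frac{2pc_*}{p-1}$ from Lemma \ref{lm9}, so there is no gap there.
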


\begin{proof}
For $r\geq1$, let
\begin{eqnarray*}
\Omega^{+}_{r}=\{x\in G: d^{S}(x,e)> r,~ a(x)\geq M\}\qquad \text{and}\qquad \Omega^{-}_{r}=\{x\in G: d^{S}(x,e)> r,~ a(x)< M\}.
\end{eqnarray*}
 By  Lemma \ref{lm9}, for $n$ large enough, one has that
\begin{eqnarray*}
\int_{\Omega^{+}_{r}}|u_n|^2\,d\mu&\leq&\frac{1}{1+\lambda M}\int_{G}(1+\lambda a(x))|u_n|^2\,d\mu\\&\leq&
\frac{1}{1+\lambda M}\|u_n\|^2_{E_\lambda}\\&\leq&\frac{1}{1+\lambda M}(\frac{2pc}{p-1}+1)\\&\leq&\frac{1}{1+\lambda M}(\frac{2pc_*}{p-1}+1)
\\&\rightarrow&0, \qquad\lambda\rightarrow+\infty.
\end{eqnarray*}

For $q>1$, by the H\"{o}lder inequality and  the condition ($A_2$), for $n$ large enough, we get that
\begin{eqnarray*}
\int_{\Omega^{-}_{r}}|u_n|^2\,d\mu&\leq&(\int_{G}|u_n|^{2q}\,d\mu)^{\frac{1}{q}}(\mu(\Omega^{-}_{r}))^{1-\frac{1}{q}}\\
&\leq&\|u_n\|^2_{E_\lambda}(\mu(\Omega^{-}_{r}))^{1-\frac{1}{q}}\\&\leq&(\frac{2pc_*}{p-1}+1)(\mu(\Omega^{-}_{r}))^{1-\frac{1}{q}}\\&\rightarrow&0, \qquad r\rightarrow+\infty.
\end{eqnarray*}

Let $\phi\in C(G)$ such that $\phi(x)=1$ for $d^{S}(x,e)\geq r+1$ and $\phi(x)=0$ for $d^{S}(x,e)\leq r$. By Lemma \ref{lm1} and the fact that $\|u\|_{\ell^\frac{2Np}{N+\alpha}(G\setminus B^{S}_r(e))}\leq \|u\|_{\ell^2(G\setminus B^{S}_r(e))}$ with $\frac{2Np}{N+\alpha}\geq2$, for $n$ large enough, we have that
\begin{eqnarray*}
\int_{G\setminus B^{S}_r(e)}(R_{\alpha}\ast|u_n|^p)|u_n|^p \,d\mu&=&\int_{G}(R_{\alpha}\ast|u_n|^p)|\phi u_n|^p \,d\mu\\&\leq&C(\int_{G}|u_n|^{\frac{2Np}{N+\alpha}}\,d\mu)^{\frac{N+\alpha}{2N}}(\int_{G}|\phi u_n|^{\frac{2Np}{N+\alpha}}\,d\mu)^{\frac{N+\alpha}{2N}}\\&\leq&C\|u_n\|^{p}_{E_\lambda}(\int_{G\setminus B^{S}_r(e)}|u_n|^{\frac{2Np}{N+\alpha}}\,d\mu)^{\frac{N+\alpha}{2N}}
\\&\leq&C(\frac{2p c_*}{p-1}+1)^{p}\|u_n\|^{p}_{\ell^{\frac{2Np}{N+\alpha}}(G\setminus B^{S}_r(e))}
\\&\leq&C\|u_n\|^{p}_{\ell^{2}(G\setminus B^{S}_r(e))}\\&=&C(\int_{\Omega^{+}_{r}}|u_n|^2 \,d\mu+\int_{\Omega^{-}_{r}}|u_n|^2 \,d\mu)^{\frac{p}{2}}
\\&\rightarrow&0, \qquad\lambda,~r\rightarrow+\infty.
\end{eqnarray*}

\end{proof}
Based on the arguments discussed above, now we prove the Proposition \ref{lm13}.

\
\

{\bf Proof of Proposition \ref{lm13}}:
Let $0<\varepsilon<\frac{pc_0}{p-1}$, where $c_0$ is the constant in Lemma \ref{lm9}. Then for the given $c^*>0$, we choose $\lambda^*=\lambda_{\varepsilon}>0$ and $r_\varepsilon>0$ as in Lemma \ref{lm12}.

Let $\{u_n\}\subset E_\lambda$ be a $(PS)_c$ sequence of the functional $J_\lambda$ with $c\leq c^*$ and $\lambda\geq\lambda^*$. By Lemma \ref{lm10}, there exists $u\in E_\lambda$ such that
\begin{eqnarray*}
u_n\rightharpoonup u,\qquad \text{in}~E_\lambda,\qquad\text{and}\qquad u_n\rightarrow u,\qquad \text{pointwise~in}~G.
\end{eqnarray*}
Denote $$v_n(x)=u_n(x)-u(x).$$
By Lemma \ref{lm11}, one sees that $\{v_n\}\subset E_\lambda$ is a $(PS)_d$ sequence of the functional $J_\lambda$ with $d=c-J_\lambda(u)$, i.e.
\begin{eqnarray*}
J_\lambda(v_n)\rightarrow d, \qquad \text{in}~ E_{\lambda},\qquad\text{and}\qquad
J'_\lambda(v_n)\rightarrow 0, \qquad \text{in}~ E^{*}_{\lambda}.
\end{eqnarray*}
We claim that $d=0$. By contradiction, if $d\neq0$, then by Lemma \ref{lm9}, $d\geq c_0>0$. Direct calculation yields that
\begin{eqnarray*}
d+o(1)&=&J_\lambda(v_n)-\frac{1}{2}(J'_\lambda(v_n),v_n)\\&=&(\frac{1}{2}-\frac{1}{2p})\int_{G}(R_{\alpha}\ast|v_n|^p)|v_n|^p\,d\mu,
\end{eqnarray*}
which means that
\begin{eqnarray*}
\underset{n\rightarrow+\infty}{\lim}\int_{G}(R_{\alpha}\ast|v_n|^p)|v_n|^p\,d\mu=\frac{2pd}{p-1}\geq\frac{2pc_0}{p-1}.
\end{eqnarray*}
On the other hand, by Lemma \ref{lm12}, we have that
\begin{eqnarray*}
\underset{n\rightarrow+\infty}{\limsup}\int_{G\setminus B^{S}_{r_{\varepsilon}}(e)}(R_{\alpha}\ast|v_n|^p)|v_n|^p \,d\mu\leq\varepsilon<\frac{pc_0}{p-1}.
\end{eqnarray*}
The above two inequalities imply that $v_n\rightarrow v$ pintwise in $G$ with some $v\neq 0$, which contradicts $v_n\rightarrow 0$ pointwise in $G$. Hence $d=0$. By Lemma \ref{lm9}, we get that
\begin{eqnarray*}
\underset{n\rightarrow+\infty}{\lim}\|v_n\|^{2}_{E_\lambda}=\frac{2pd}{p-1}=0.
\end{eqnarray*}
That is $u_n\rightarrow u$ in $E_\lambda$.\qed

In the following, we prove the existence of ground state solutions  to the equation (\ref{1}).

\
\

{\bf Proof of Theorem \ref{th1}}: By Lemma \ref{lm6}, one sees that $J_\lambda$ satisfies the mountain-pass geometry. Hence there exists a sequence $\{u_n\}\subset E_\lambda$ such that
\begin{eqnarray*}\label{3-9}
J_\lambda(u_n)\rightarrow m_\lambda, \qquad\text{in~}E_\lambda,\qquad\text{and}\qquad J'_\lambda(u_n)\rightarrow 0,\qquad\text{in~} E^{*}_\lambda.
\end{eqnarray*}
Then it follows from Lemma \ref{lm10} that there exists $u_0\in E_\lambda$ such that up to a subsequence,
\begin{eqnarray*}\label{3-0}
\begin{array}{ll}
u_n\rightharpoonup u_{0},\qquad \text{in}~E_\lambda,\\
u_n\rightarrow u_{0}, \qquad \text{pointwise~in}~G,\\
J'_{\lambda}(u_{0})=0,\qquad \text{in}~E^{*}_\lambda.
\end{array}
\end{eqnarray*}
By proposition \ref{lm13}, there exists $\lambda_0>0$ such that, for any $\lambda\geq\lambda_0$, $u_n\rightarrow u_0$ in $E_\lambda$.
Then it follows from Lemma \ref{lm9} and $m_\lambda>0$ that
\begin{eqnarray*}
\|u_0\|^{2}_{E_\lambda}=\underset{n\rightarrow+\infty}{\lim}\|u_n\|^{2}_{E_\lambda}=\frac{2pm_\lambda}{p-1}>0,
\end{eqnarray*}
which yields $u_0\neq0$. By Lemma \ref{lml}, one sees that $u_0\in N_\lambda$. Then
\begin{eqnarray*}
J_\lambda(u_0)&=&J_\lambda(u_0)-\frac{1}{2p}(J'_\lambda(u_0),u_0)\\&=&(\frac{1}{2}-\frac{1}{2p})\|u_0\|^{2}_{E_\lambda}\\&=&
\underset{n\rightarrow+\infty}{\lim}~(\frac{1}{2}-\frac{1}{2p})\|u_n\|^{2}_{E_\lambda}\\&=&\underset{n\rightarrow+\infty}{\lim}~
(J_\lambda(u_n)-\frac{1}{2p}(J'_\lambda(u_n),u_n))\\&=&m_\lambda>0.
\end{eqnarray*}
Therefore, $u_\lambda=u_0\in N_\lambda$ is a ground state solution of (\ref{1}).\qed

\section{The convergence of ground state solutions}

In this section, we show that as $\lambda\rightarrow+\infty$, the ground state solution $u_\lambda$ of (\ref{1}) converges to a ground state solution of (\ref{2}) in $W^{2,2}(G)$. First, we prove the asymptotic behavior of $m_\lambda$ as $\lambda\rightarrow+\infty.$
\begin{lm}\label{lm14}
{\rm
Assume that $N\geq 2,\,0<\alpha<N,\,p>\frac{N+\alpha}{N}$ and ($A_1$)-($A_2$) are satisfied. Then
$m_\lambda\rightarrow m_\Omega$ as $\lambda\rightarrow+\infty$.
}
\end{lm}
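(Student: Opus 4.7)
The plan is to prove both $m_\lambda \le m_\Omega$ for every $\lambda > 0$ and $\liminf_{\lambda \to \infty} m_\lambda \ge m_\Omega$. For the upper bound I take a ground state $w \in N_\Omega$ of the limiting problem (whose existence follows from an argument parallel to Theorem \ref{th1} adapted to the finite domain $\Omega$) and extend it by $0$ to all of $G$. Since $a \equiv 0$ on $\Omega$, the potential term satisfies $\int_G (1 + \lambda a) w^2\, d\mu = \int_\Omega w^2\, d\mu$. Because $w$ vanishes on $\partial\Omega$ and beyond, the contributions to $|\nabla w|^2$ and $|\Delta w|^2$ at vertices outside $\bar\Omega$ are zero, so $\|w\|_{E_\lambda}^2 = \|w\|_{E(\Omega)}^2$, and for the same reason the nonlocal integral over $G$ coincides with its $\Omega$-counterpart. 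Hence $w \in N_\lambda$ and $J_\lambda(w) = J_\Omega(w) = m_\Omega$, which gives $m_\lambda \le m_\Omega$.

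For the lower bound I fix a sequence $\lambda_n \to \infty$ and let $u_n := u_{\lambda_n}$ be the ground states from Theorem \ref{th1}. By Remark \ref{lmc} and the upper bound, $\|u_n\|_{E_{\lambda_n}}^2 = \frac{2p\, m_{\lambda_n}}{p-1} \le \frac{2p\, m_\Omega}{p-1}$, so $\{u_n\}$ is bounded in $W^{2,2}(G)$, and up to a subsequence $u_n \rightharpoonup u$ weakly in $W^{2,2}(G)$ and pointwise on $G$. The uniform bound on $\lambda_n \int_G a\, u_n^2\, d\mu$ combined with Fatou's lemma forces $a u^2 \equiv 0$, so $\text{supp}(u) \subseteq \Omega$. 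To exclude $u \equiv 0$, Lemma \ref{lmo}(i) provides $\|u_n\|_{E_{\lambda_n}} \ge \sigma > 0$ uniformly, and combining $(J'_{\lambda_n}(u_n), u_n) = 0$ with the HLS inequality \eqref{f1} yields $\|u_n\|_{\ell^{2Np/(N+\alpha)}} \ge c > 0$. Splitting $G \setminus \Omega = \{0 < a < M\} \cup \{a \ge M\}$, the first set is finite by $(A_2)$ so pointwise convergence is equivalent to $\ell^q$-convergence there, while on the second set $\|u_n\|_{\ell^2}^2 \le (1 + \lambda_n M)^{-1} \|u_n\|_{E_{\lambda_n}}^2 \to 0$, which transfers to $\ell^{2Np/(N+\alpha)}$ via the embedding $\ell^2 \hookrightarrow \ell^{2Np/(N+\alpha)}$. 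The positive mass therefore concentrates on the finite set $\Omega$, forcing $u \ne 0$.

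To finish, I test $J'_{\lambda_n}(u_n) = 0$ against any $\phi \in C_c(\Omega)$ (on which $a\phi \equiv 0$, eliminating the $\lambda_n$-dependence) and pass to the limit as in Lemma \ref{lm10}(iii), exploiting the finiteness of $\text{supp}(\phi)$, to obtain $(J'_\Omega(u), \phi) = 0$. Hence $u \in N_\Omega$ and $J_\Omega(u) \ge m_\Omega$. Since $u$ is supported in $\Omega$, one has $\|u\|_{E(\Omega)}^2 = \|u\|_{W^{2,2}(G)}^2 \le \liminf_n \|u_n\|_{W^{2,2}(G)}^2 \le \liminf_n \|u_n\|_{E_{\lambda_n}}^2$ by weak lower semicontinuity of the $W^{2,2}$ norm, and using the identity $J_\bullet = (\tfrac12 - \tfrac1{2p})\|\cdot\|^2$ on the respective Nehari manifolds this gives $m_\Omega \le J_\Omega(u) \le \liminf_n m_{\lambda_n}$, which combined with the upper bound yields $m_{\lambda_n} \to m_\Omega$. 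The main obstacle is preventing $u \equiv 0$: without the uniform Nehari lower bound and the sharp localization provided by $(A_2)$, the $\ell^{2Np/(N+\alpha)}$-mass of $u_n$ could leak into $\{a > 0\}$ and disappear in the limit, severing the link between $u$ and the energy level $m_\Omega$.
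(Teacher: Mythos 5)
Your proposal is correct, and it reaches the conclusion by a genuinely different route in two places. For the upper bound you and the paper do essentially the same thing (zero extension gives $N_\Omega\subset N_\lambda$). For the lower bound, the paper argues by contradiction and gets non-vanishing of the weak limit $u$ indirectly: it first proves $u_{\lambda_n}\to u$ in $\ell^q(G)$ for $q>2$ via the Lions lemma (Lemma \ref{lm16}) and an escaping-points contradiction, then invokes the nonlocal Br\'ezis--Lieb lemma (Lemma \ref{lm3}) to pass to the limit in $\int_G(R_\alpha\ast|u_n|^p)|u_n|^p\,d\mu$ and deduces $u\neq 0$ from Lemma \ref{lmo}(i). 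You instead localize the $\ell^{2Np/(N+\alpha)}$ mass directly: the uniform Nehari lower bound plus HLS gives $\|u_n\|_{2Np/(N+\alpha)}\geq c>0$, the part of $G\setminus\Omega$ where $a\geq M$ carries vanishing $\ell^2$ mass because of the $\lambda_n a$ weight, and the part where $a\leq M$ is finite by $(A_2)$, so the mass must survive on the finite set $\Omega$; this is cleaner and in fact yields strong $\ell^q(G)$ convergence for all $q\geq 2$ without the Lions lemma. The second divergence is the endgame: the paper only establishes the inequality $\|u\|_{E(\Omega)}^2\leq\int_\Omega(R_\alpha\ast|u|^p)|u|^p\,d\mu$ by weak lower semicontinuity and then projects onto $N_\Omega$ via the scaling $t_u\in(0,1]$ of Remark \ref{f3}(ii), whereas you pass to the limit in the Euler--Lagrange equation against $\phi\in C_c(\Omega)$ (exactly as the paper itself does later in the proof of Theorem \ref{th3}) to conclude $u\in N_\Omega$ with $t=1$, and then compare energies by weak lower semicontinuity. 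Both are valid; the paper's version avoids the limit passage in the equation inside this lemma at the cost of the Br\'ezis--Lieb and Lions machinery, while yours front-loads the critical-point identification that Theorem \ref{th3} needs anyway. Two cosmetic remarks: for the upper bound you do not need attainment of $m_\Omega$ --- any $w\in N_\Omega$ already gives $m_\lambda\leq J_\Omega(w)$ --- and your appeal to Fatou for $au^2\equiv 0$ is a slightly slicker replacement for the paper's pointwise blow-up argument at a single vertex $x_0$ with $a(x_0)>0$.
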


\begin{proof}
First, for any $\lambda>0$, since $N_\Omega\subset N_\lambda$, we have that $m_\lambda\leq m_\Omega$ .
We prove this lemma by contradiction.
Assume that there exists a sequence $\lambda_n\rightarrow+\infty$ such that $\underset{n\rightarrow+\infty}{\lim}m_{\lambda_n}=k<m_\Omega$. By Theorem \ref{th1}, there exists a sequence $\{u_{\lambda_n}\}\subset N_{\lambda_n}$, ground state solutions of (\ref{1}), such that $J_{\lambda_n}(u_{\lambda_n})=m_{\lambda_n}>0$. Since $\{u_{\lambda_n}\}$ is uniformly bounded in $W^{2,2}(G)$, up to a subsequence, there exists $u\in W^{2,2}(G)$ such that
\begin{eqnarray*}
u_{\lambda_n}\rightharpoonup u,\qquad \text{in}~W^{2,2}(G),\quad
u_{\lambda_n}\rightarrow u, \qquad \text{pointwise~in}~G.
\end{eqnarray*}

Now we claim that $u=0$ in $\Omega^{c}$, where $\Omega^{c}=G\setminus\Omega$. In fact, if $u\neq0$ in $\Omega^{c}$, then there exists a vertex $x_0\not\in\Omega$ such that $u(x_0)\neq 0$. Clearly, $a(x_0)>0, u_{\lambda_n}(x_0)\rightarrow u(x_0)\neq0$. Since $u_{\lambda_n}\in N_{\lambda_n}$ and $\lambda_n\rightarrow+\infty$, we get that
\begin{eqnarray*}
J_{\lambda_n}(u_{\lambda_n})&=&(\frac{1}{2}-\frac{1}{2p})\|u_{\lambda_n}\|^2_{E_{\lambda_n}}\\&\geq&(\frac{1}{2}-\frac{1}{2p})\int_{G}\lambda_n a(x) |u_{\lambda_n}|^2 \,d\mu\\&\geq&(\frac{1}{2}-\frac{1}{2p})\lambda_n a(x_0)|u_{\lambda_n}(x_0)|^2\\&\rightarrow&+\infty,\qquad n\rightarrow+\infty,
\end{eqnarray*}
which contradicts the fact that $k< m_\Omega$. Hence $u=0$ in $\Omega^{c}$.

Next we prove that $u_{\lambda_n}\rightarrow u$ in $\ell^{q}(G)$ for any $q>2$. Otherwise, by Lemma \ref{lm16}, there exists $\delta>0$ such that $\underset{n\rightarrow+\infty}{\lim}\|u_{\lambda_n}-u\|_{\infty}=\delta>0$. Then there exists a sequence $\{x_{n}\}\subset G$ such that $|(u_{\lambda_n}-u)(x_{n})|\geq\frac{\delta}{2}>0.$
Since $(u_{\lambda_n}-u)(x)\rightarrow 0$ pointwise in $G$ as $n\rightarrow+\infty$, we have that $d^S(x_n,e)\rightarrow+\infty$ as $n\rightarrow+\infty$.

Note that $u_{\lambda_n}\in N_{\lambda_n}$ and $\mu(B^{S}_{r}(x_n)\cap\{x: a(x)\leq M\})\rightarrow 0$ as $n\rightarrow+\infty$, where $0<r<1$. Then we have that
\begin{eqnarray*}
J_{\lambda_n}(u_{\lambda_n})&\geq&(\frac{1}{2}-\frac{1}{2p})\int_{B^{S}_{r}(x_n)\cap\{x: a(x)\geq M\}}\lambda_n a(x) u^2_{\lambda_n} \,d\mu\\&=&
(\frac{1}{2}-\frac{1}{2p})\int_{B^{S}_{r}(x_n)\cap\{x: a(x)\geq M\}}\lambda_n a(x) |u_{\lambda_n}-u|^2 \,d\mu\\&\geq&(\frac{1}{2}-\frac{1}{2p})\lambda_n M(\int_{B^{S}_{r}(x_n)} |u_{\lambda_n}-u|^2 \,d\mu-\int_{B^{S}_{r}(x_n)\cap\{x: a(x)\leq M\}}|u_{\lambda_n}-u|^2 \,d\mu)\\&=&
(\frac{1}{2}-\frac{1}{2p})\lambda_n M(|u_{\lambda_n}-u|^{2}(x_{n})-\int_{B^{S}_{r}(x_n)\cap\{x: a(x)\leq M\}}|u_{\lambda_n}-u|^2 \,d\mu)\\&\geq&
(\frac{1}{2}-\frac{1}{2p})\lambda_n M(\frac{\delta^2}{4}+o(1))\\&\rightarrow&+\infty,\qquad n\rightarrow+\infty.
\end{eqnarray*}
This is a contradiction. Hence for any $q>2$, $u_{\lambda_n}\rightarrow u$ in $\ell^{q}(G).$
Since $\frac{2Np}{N+\alpha}>2$, by the HLS inequality (\ref{f1}), we get that
\begin{eqnarray*}
\int_{G}(R_{\alpha}\ast|u_{\lambda_n}-u|^p)|u_{\lambda_n}-u|^p \,d\mu\leq C\|u_{\lambda_n}-u\|^{2p}_\frac{2Np}{N+\alpha}\rightarrow0,\qquad n\rightarrow+\infty.
\end{eqnarray*}
Then by Lemma \ref{lm3}, we get that
\begin{equation}\label{31}
\underset {n\rightarrow+\infty}{\lim}\int_{G}(R_{\alpha}\ast|u_{\lambda_n}|^p)|u_{\lambda_n}|^p \,d\mu=\int_{G}(R_{\alpha}\ast|u|^p)|u|^p \,d\mu.
\end{equation}
This implies that $u\neq0$ in $G$. In fact, if $u=0$ in $G$, note that $u_{\lambda_n}\in N_{\lambda_n}$, by the result (i) of Lemma \ref{lmo} and (\ref{31}), we obtain that
\begin{eqnarray*}
0=\underset {n\rightarrow+\infty}{\lim}\int_{G}(R_{\alpha}\ast|u_{\lambda_n}|^p)|u_{\lambda_n}|^p \,d\mu=\underset {n\rightarrow+\infty}{\lim}
\|u_{\lambda_n}\|^{2}_{E_{\lambda_n}}\geq\sigma^2>0,
\end{eqnarray*}
which is a contradiction. Hence $u\neq0$ in $\Omega$. By the weaker lower semi-continuity of the norm $\|\cdot\|_{W^{2,2}}$, $u=0$ in $\Omega^{c}$ and (\ref{31}), one has that
\begin{eqnarray*}
\int_{\bar{\Omega}}(|\Delta u|^2+|\nabla u|^2)\,d\mu+\int_{\Omega}|u|^2\,d\mu&\leq&\int_{G}(|\Delta u|^2+|\nabla u|^2+|u|^2)\,d\mu\\&\leq&\underset {n\rightarrow+\infty}{\liminf}~\int_{G}(|\Delta u_{\lambda_n}|^2+|\nabla u_{\lambda_n}|^2+|u_{\lambda_n}|^2)\,d\mu\\&\leq&\underset {n\rightarrow+\infty}{\liminf}~\int_{G}(|\Delta u_{\lambda_n}|^2+|\nabla u_{\lambda_n}|^2+(1+\lambda_n a)|u_{\lambda_n}|^2)\,d\mu\\&=&\underset {n\rightarrow+\infty}{\liminf}~\int_{G}(R_{\alpha}\ast|u_{\lambda_n}|^p)|u_{\lambda_n}|^p \,d\mu\\&=&\int_{\Omega}(R_{\alpha}\ast|u|^p)|u|^p \,d\mu.
\end{eqnarray*}
By the result (ii) of Remark \ref{f3}, there exists a unique $t\in(0,1]$ such that $tu\in N_\Omega$. Then
\begin{eqnarray*}
J_{\Omega}(tu)&=&(\frac{1}{2}-\frac{1}{2p})(\int_{\bar{\Omega}}(|t\Delta u|^2+|t\nabla u|^2)\,d\mu+\int_{\Omega}|tu|^2\,d\mu)\\&\leq&(\frac{1}{2}-\frac{1}{2p})\int_{G}(|t\Delta u|^2+|t\nabla u|^2+|tu|^2)\,d\mu\\&\leq&
\underset {n\rightarrow+\infty}{\liminf}~(\frac{1}{2}-\frac{1}{2p})\int_{G}(|t\Delta u_{\lambda_n}|^2+|t\nabla u_{\lambda_n}|^2+(1+\lambda_n a)|tu_{\lambda_n}|^2)\,d\mu\\&=&t^2~\underset {n\rightarrow+\infty}{\liminf}~J_{\lambda_n}(u_{\lambda_n})\\&\leq&
\underset {n\rightarrow+\infty}{\liminf}~m_{\lambda_n}\\&=&k.
\end{eqnarray*}
Consequently, we have that $$m_\Omega\leq J_{\Omega}(tu)\leq k<m_\Omega.$$ This is a contradiction.

\end{proof}

\
\

{\bf Proof of Theorem \ref{th3}}: We prove that for any sequence $\lambda_n\rightarrow+\infty$,  up to a subsequence, the corresponding ground state solution $u_{\lambda_n}\in N_{\lambda_n}$ converges to a ground state solution $u$ of (\ref{2}) in $W^{2,2}(G)$. Since $\{u_{\lambda_n}\}$ is uniformly bounded in $W^{2,2}(G)$, there exists $u\in W^{2,2}(G)$ such that
\begin{eqnarray*}
u_{\lambda_n}\rightharpoonup u,\qquad \text{in}~W^{2,2}(G),\quad
u_{\lambda_n}\rightarrow u, \qquad \text{pointwise~in}~G.
\end{eqnarray*}
By similar arguments as in Lemma \ref{lm14}, we can show that $u=0$ in $\Omega^{c}$ and $u_{\lambda_n}\rightarrow u$ in $\ell^{q}(G)$ with $q>2$,
and thus
\begin{equation}\label{df}
\underset {n\rightarrow+\infty}{\lim}\int_{G}(R_{\alpha}\ast|u_{\lambda_n}|^p)|u_{\lambda_n}|^p \,d\mu=\int_{G}(R_{\alpha}\ast|u|^p)|u|^p \,d\mu.
\end{equation}
This means that $u\neq0$ in $\Omega$. Now we prove that $u\in E(\Omega)$ is a ground state solution of $(\ref{2})$. In fact, since $J'_{\lambda_n}(u_{\lambda_n})=0$, for any $v\in C_c(\Omega)$,
\begin{eqnarray*}
\int_{\bar{\Omega}}(\Delta u_{\lambda_n}\Delta v+\nabla u_{\lambda_n}\nabla v)\,d\mu+\int_{\Omega}(1+\lambda_n a)u_{\lambda_n}v\,d\mu=\int_{\Omega}(R_{\alpha}\ast|u_{\lambda_n}|^p)|u_{\lambda_n}|^{p-2}u_{\lambda_n}v \,d\mu.
\end{eqnarray*}
Note that $a(x)=0$ for $x\in \Omega$. Moreover, since $\bar{\Omega}$ is a finite set and $u_{\lambda_n}\rightarrow u$ pointwise in $G$, similar to the proof of the result (iii) in Lemma \ref{lm10}, one gets that
\begin{eqnarray*}
\int_{\bar{\Omega}}(\Delta u\Delta v+\nabla u\nabla v)+\int_{\Omega}uv\,d\mu=\int_{\Omega}(R_{\alpha}\ast|u|^p)|u|^{p-2}uv \,d\mu.
\end{eqnarray*}
Hence $u$ is a nonzero critical point for the functional $J_{\Omega}$, i.e. $J'_{\Omega}(u)=0$. By Lemma \ref{lml}, one has that $u\in N_\Omega$.

By $u=0$ in $\Omega^{c}$ and (\ref{df}), one gets that
\begin{eqnarray*}
m_{\lambda_n}=J_{\lambda_n}(u_{\lambda_n})&=&(\frac{1}{2}-\frac{1}{2p})\int_{G}(R_{\alpha}\ast|u_{\lambda_n}|^p)|u_{\lambda_n}|^p \,d\mu\\&=&(\frac{1}{2}-\frac{1}{2p})\int_{G}(R_{\alpha}\ast|u|^p)|u|^p \,d\mu +o(1)\\&=&(\frac{1}{2}-\frac{1}{2p})\int_{\Omega}(R_{\alpha}\ast|u|^p)|u|^p \,d\mu+o(1)\\&=&J_{\Omega}(u)+o(1).
\end{eqnarray*}
By Lemma \ref{lm14}, we get that $J_{\Omega}(u)=m_\Omega$. Hence, $u$ is a ground state solution of (\ref{2}).

Note that $u_{\lambda_n}\in N_{\lambda_n}$, $u\in N_{\Omega}$ with $u=0$ in $\Omega^{c}$, by the Br\'{e}zis-Lieb lemma and (\ref{df}), one concludes that
\begin{eqnarray*}
\|u_{\lambda_n}-u\|^{2}_{E_{\lambda_n}}&=&\int_{G}|\Delta(u_{\lambda_n}-u)|^2+|\nabla(u_{\lambda_n}-u)|^2+(1+\lambda_n a)|u_{\lambda_n}-u|^2\,d\mu\\&=&
\|u_{\lambda_n}\|^{2}_{E_{\lambda_n}}-\|u\|^{2}_{E(\Omega)}+o(1)\\&=&
\int_{G}(R_{\alpha}\ast|u_{\lambda_n}|^p)|u_{\lambda_n}|^p \,d\mu-\int_{\Omega}(R_{\alpha}\ast|u|^p)|u|^p \,d\mu+o(1)\\&=&o(1).
\end{eqnarray*}
Hence $u_{\lambda_n}\rightarrow u$ in $W^{2,2}(G)$. The proof is completed.

\
\

\section{Acknowledgements}
We would like to take this opportunity to express our gratitude to Bobo Hua, who has given us so much valuable suggestions on our results, and has tried his best to improve our paper. 

\end{document}